
\documentclass[a4paper,reqno,11pt]{amsart}

\usepackage{amsmath, amsfonts, amssymb, amsthm, amscd}
\usepackage{graphicx}
\usepackage{psfrag}
\usepackage{perpage}
\usepackage{url}
\usepackage{color}

\usepackage[utf8]{inputenc}
\usepackage[T1]{fontenc}
\usepackage{microtype}

\usepackage[a4paper,scale={0.72,0.74},marginratio={1:1},footskip=7mm,headsep=10mm]{geometry}

\usepackage{hyperref}

\setcounter{secnumdepth}{2}

\frenchspacing

\numberwithin{equation}{section}

\newtheorem{theorem}{Theorem}[section]
\newtheorem{lemma}[theorem]{Lemma}

\newtheorem{remark}[theorem]{Remark}


\newcommand{\bbE}{{\ensuremath{\mathbb E}} }

\newcommand{\bbN}{{\ensuremath{\mathbb N}} }

\newcommand{\bbP}{{\ensuremath{\mathbb P}} }

\newcommand{\bbR}{{\ensuremath{\mathbb R}} }


\newcommand{\cN}{{\ensuremath{\mathcal N}} }


\newcommand{\ga}{\alpha}
\newcommand{\gb}{\beta}

\newcommand{\gd}{\delta}

\newcommand{\gz}{\zeta}

\newcommand{\gl}{\lambda}
\newcommand{\gL}{\Lambda}

\newcommand{\go}{\omega}


\renewcommand{\tilde}{\widetilde}          
\DeclareMathSymbol{\leqslant}{\mathalpha}{AMSa}{"36} 
\DeclareMathSymbol{\geqslant}{\mathalpha}{AMSa}{"3E} 
\DeclareMathSymbol{\eset}{\mathalpha}{AMSb}{"3F}     
\newcommand{\dd}{\text{\rm d}}             

\newcommand{\sumtwo}[2]{\sum_{\substack{#1 \\ #2}}} 


\newcommand{\R}{\mathbb{R}}

\newcommand{\Z}{\mathbb{Z}}
\newcommand{\N}{\mathbb{N}}

\def\bs{\boldsymbol}

\newcommand{\PEfont}{\mathrm}

\def\p{\ensuremath{\PEfont P}}
\def\e{\ensuremath{\PEfont E}}
\newcommand{\E}{\e}
\renewcommand{\P}{\p}

\newcommand\bE{\ensuremath{\bs{\mathrm{E}}}}

\DeclareMathOperator{\bbvar}{\ensuremath{\mathbb{V}ar}}

\newcommand{\ind}{{\sf 1}}

\renewcommand{\epsilon}{\varepsilon}
\renewcommand{\theta}{\vartheta}
\renewcommand{\rho}{\varrho}
\renewcommand{\phi}{\varphi}


\newenvironment{myenumerate}{%
\renewcommand{\theenumi}{\arabic{enumi}}%
\renewcommand{\labelenumi}{{\rm(\theenumi)}}%
\begin{list}{\labelenumi}
	{%
	\setlength{\itemsep}{0.4em}%
	\setlength{\topsep}{0.5em}%
	\setlength\leftmargin{2.45em}%
	\setlength\labelwidth{2.05em}%
	\setlength{\labelsep}{0.4em}%
	\usecounter{enumi}%
	}%
	}%
{\end{list}
}

{\end{myenumerate}}

\newenvironment{myitemize}{%
\begin{list}{$\bullet$}%
 	{%
	\setlength{\itemsep}{0.4em}%
	\setlength{\topsep}{0.5em}%
	\setlength\leftmargin{2.45em}%
	\setlength\labelwidth{2.05em}%
	\setlength{\labelsep}{0.4em}%
	}%
	}%
{\end{list}}

\renewenvironment{itemize}{
\begin{myitemize}}%
{\end{myitemize}}


\MakePerPage[2]{footnote} 



\begin{document}

\begin{abstract}
We study random pinning and copolymer models,
when the return distribution of the
underlying renewal process has a polynomial tail with finite mean.
We compute the asymptotic behavior of the critical curves of the models
in the weak coupling regime, showing that it is universal.
This proves a conjecture of Bolthausen, den Hollander and Opoku
for copolymer models \cite{cf:BdHO}, which we also extend to pinning models.
\end{abstract}

\title[Critical Curves at Weak Coupling]{The Critical Curves of
the Random Pinning and Copolymer Models at Weak Coupling}

\author[Q.Berger]{Quentin Berger}
\address{Department of Mathematics, KAP 108\\
University of Southern California\\
Los Angeles, CA  90089-2532 USA}
\email{qberger@usc.edu}

\author[F.Caravenna]{Francesco Caravenna}
\address{Dipartimento di Matematica e Applicazioni\\
 Universit\`a degli Studi di Milano-Bicocca\\
 via Cozzi 53, 20125 Milano, Italy}
\email{francesco.caravenna@unimib.it}

\author[J.Poisat]{Julien Poisat}
\address{
Mathematical Institute,
Leiden University\\
 P.O. Box 9512, NL-2300 RA Leiden\\
  The Netherlands
}
\email{poisatj@math.leidenuniv.nl}
\author[R.Sun]{Rongfeng Sun}
\address{
Department of Mathematics\\
National University of Singapore\\
10 Lower Kent Ridge Road, 119076 Singapore
}
\email{matsr@nus.edu.sg}
\author[N.Zygouras]{Nikos Zygouras}
\address{Department of Statistics\\
University of Warwick\\
Coventry CV4 7AL, UK}
\email{N.Zygouras@warwick.ac.uk}
\date{\today}

\keywords{Pinning Model, Copolymer Model, Critical Curve, Fractional Moments, Coarse Graining}
\subjclass{Primary: 82B44; Secondary: 82D60, 60K35}

\maketitle

\def\rc{\mathrm{c}}
\def\dd{\mathrm{d}}

\def\hZ{{\hat{Z}}}
\def\hbeta{{\hat{\beta}}}
\def\hh{{\hat{h}}}
\def\homega{{\hat{\omega}}}
\def\hC{{\hat{C}}}
\def\hlambda{{\hat{\lambda}}}

\def\f{\textsc{f}}
\def\hf{\hat{\textsc{f}}}

\def\cop{\mathrm{cop}}


\section{Introduction}

The presence of disorder can drastically change the statistical mechanical properties
of a physical system and alter the nature of its phase transitions,
leading to new phenomena. Using a random walk to model a polymer chain, the effect of disorder on
random walk models has been of particular interest recently, giving rise to many random
polymer models~\cite{cf:dH, cf:G1}. In this work we focus on two important classes
of random polymer models, the so-called pinning and copolymer models.
\begin{itemize}
\item In the pinning model, the disorder is attached to a defect 
line, which can either attract or repel the random walk path.
As the temperature varies, a localization-delocalization phase transition takes place:
for sufficiently low temperatures, the random walk is absorbed at the defect line,
while at high temperature it wanders away.
The origins of such a model can be traced back to studies 
of wetting phenomena \cite{cf:FLNO, cf:DHV} or localization of flux lines in superconducting 
vortex arrays \cite{cf:NV}. We also mention the interesting phenomenon
of DNA denaturation, cf. \cite{cf:PS}.

\item In the copolymer model, disorder is distributed along the random walk path, 
which meanders between two solvents separated by a flat interface. 
Each step of the random walk path can be regarded as 
a monomer, and the disorder attached to the monomer determines whether it prefers one solvent 
or the other. Also for this model, a sharp
localization-delocalization phenomenon is observed:
the typical random walk
paths are either very close to the interface (in order to place most monomers in their preferred 
solvents) or very far from it, according to the temperature. 
The origins of the copolymer model in this context can be traced back to \cite{cf:GHLO}. 
\end{itemize}

The purpose of this paper is to investigate the 
phase diagram of both models in the weak coupling regime,
in the case when the excursions of the random walk away from the defect line
(or interface) have a power-law tail with finite mean.

\subsection{Review of the models}
We first recall the definition of the random pinning and copolymer models. For a general 
overview, we refer to~\cite{cf:dH, cf:G1, cf:G2, cf:CGT}. 

The polymer chain is modeled by a Markov chain
$S = \{S_n\}_{n\geq 0}$ on $\Z$ with $S_0=0$, that will be called \emph{the walk}. Probability 
and expectation for $S$ will be denoted respectively by $\p$ and $\e$. 
We denote by $\tau:=\{\tau_n\}_{n\geq 0}$, with
$0 = \tau_0 < \tau_1 < \tau_2 < \ldots$, the sequence of random times in which the walk 
visits $0$, so that $\tau$ is a \emph{renewal process} with $\tau_0=0$.
We assume that $\tau$ is \emph{non-terminating}, that is $\p(\tau_1 < \infty) = 1$, and that
\begin{equation} \label{eq:ass}
	K(n):=\p(\tau_1=n)=\frac{\phi(n)}{n^{1+\alpha}}\,, \quad \forall n\in \N = \{1, 2, \ldots\} \,,
\end{equation}
where $\alpha\in [0,+\infty)$ and $\phi: (0,\infty) \to (0,\infty)$ is a slowly varying function. 
In this paper \emph{we focus on the case $\alpha > 1$,
for which the mean return time is finite}:
\begin{equation}\label{eq:mean}
	\mu \,:=\, \e[\tau_1] \,\in\, (1, \infty) \,.
\end{equation}

\begin{remark}\rm
Many interesting examples have periodicity issues, that is, there exists $\text{\sc t} \in \N$
such that $K(n) = 0$ if $n \not\in \text{\sc t}\N$.
For instance, the simple symmetric random walk on $\Z$
satisfies \eqref{eq:ass} for $n\in 2\N$,
with $\alpha=1/2$ and $\phi(\cdot)$ converging asymptotically to a constant.  
However, we shall assume for simplicity that $K(n) > 0$ for every $n\in\N$.
Everything can be easily extended to the periodic case, at the expense of some
cumbersome notation.
\end{remark}

\begin{remark}\rm
As it will be clear, in our framework the fundamental object
is the renewal process $\tau$, and \emph{there is no need to refer to the Markov chain $S$}.
However, let us mention that, for any $\alpha>0$ and any slowly varying function $\varphi$, a 
nearest-neighbor Markov chain $S$ on $\Z$ with Bessel-like drift can be constructed,
which satisfies assumption \eqref{eq:ass} asymptotically, that is
$K(n)\sim \varphi(n)/n^{1+\alpha}$ as $n\to\infty$, cf.~\cite{cf:A}. 
\end{remark}

The disorder is modeled by a sequence $\go:=\{\go_n\}_{n\geq 1}$ of i.i.d.\
real random variables. Probability and expectation for $\go$ will be denoted respectively 
by $\bbP$ and $\bbE$. We assume that
\begin{equation} \label{eq:assdis}
	M(t):=\bbE[e^{t\go_1}] <\infty \quad \forall |t| < t_0,\text{ with } t_0 > 0\,, \qquad
	\bbE[\go_1]=0\,, \qquad \bbvar(\go_1)=1 \,.
\end{equation}
We will often be making use of 
the log-moment generating function, that is
\begin{equation} \label{eq:asLambda}
	\Lambda(t):=\log M(t) = \frac{1}{2} t^2 + o(t^2) \quad \text{ as } t \to 0 \,.
\end{equation}
Given a realization of the disorder $\go$, the random \emph{pinning model} is defined by a Gibbs 
transform of the law $\p$ of the renewal process $\tau$ (or, if one wishes,
of the walk $S$):
 \begin{equation}\label{eq:defpin}
 \dd\p_{N,\beta,h}^{{\rm pin},\go}:=\frac{1}{Z_{N,\gb,h}^{{\rm pin},\go}} \,e^{\sum_{n=1}^N
	(\beta \omega_n - h_a^{{\rm pin}}(\beta) + h) \ind_{\{n\in\tau\}}} \dd\p,
 \end{equation}
where $\{n\in\tau\}$ is a shorthand for $\bigcup_{k\in\N} \{\tau_k = n\}$,
the event that the renewal process $\tau$ visits $n$
(which corresponds to $\{S_n=0\}$, referring to the walk $S$).
The parameter $\beta\geq 0$ is the coupling constant (or 
inverse temperature), $h\in\R$ adds a bias to the disorder, and
\begin{equation}\label{eq:hannpin}
	h_a^{{\rm pin}}(\beta) := \Lambda(\beta) = \frac{\beta^2}{2} + o(\beta^2)
	\qquad \text{(as $\beta \downarrow 0$)}\,
\end{equation}
is the {\it annealed} critical point, the significance of which will be discussed later. 
The normalizing constant
\begin{equation}\label{eq:Zpin}
	Z_{N,\gb,h}^{{\rm pin},\go}:=\E\left[ e^{\sum_{n=1}^N (\beta \omega_n 
	-h_a^{{\rm pin}}(\beta) + h) \ind_{\{n\in\tau\}}} \right]
\end{equation}
is called the \emph{partition function}. 
We will also consider the \emph{constrained} partition function
\begin{equation}\label{eq:Zpinc}
Z_{N,\beta,h}^{{\rm pin},\rc,\omega} \,=\,	\E\Big[ e^{\sum_{n=1}^N(\beta \omega_n 
-h_a^{{\rm pin}}(\beta) + h) \ind_{\{n\in\tau\}}} \ind_{\{N \in \tau\}}\Big] \,.
\end{equation}

In order to define the copolymer model, one traditionally works with nearest-neighbor
walks $S$ that make symmetric excursions in the positive or negative half-plane.
The signs of the excursions are then i.i.d.\ symmetric $\{\pm 1\}$-valued 
random variables, independent of $\tau$. In our framework,
it is actually simpler to proceed as in \cite{cf:CGT}.
Without making any reference to the walk $S$, under the law $\p$ we introduce a sequence of
i.i.d. symmetric $\pm 1$ random variables
$\hat\varepsilon:=\{\hat\varepsilon_n\}_{n\geq 1}$, independent of the renewal process $\tau$, 
which model the sign of the excursion of the walk
during the renewal interval $(\tau_{n-1}, \tau_n)$ (even when this interval
has length $1$).
We also introduce the variables 
$\varepsilon_n=\sum_{k\geq 1} \hat\varepsilon_k 1_{\{ n\in (\tau_{k-1},\tau_k]\}}$, 
which represent the sign of the $n^{th}$ step of the walk.
Given the disorder $\omega$, the \emph{copolymer model} is
then defined via a similar Gibbs transform of the law $\p$:
\begin{equation}\label{eq:cop}
 	\dd\p_{N,\gl,h}^{{\rm cop},\go}:=\frac{1}{Z_{N,\gl,h}^{{\rm cop},\go}} \,
	 e^{-2\lambda \sum_{n=1}^N
	(\omega_n+h^{{\rm cop}}_a(\lambda)- h) \ind_{\{\varepsilon_n = -1\}}} \dd\p,
\end{equation}
where $\lambda\geq 0$ is the coupling constant,
$h\in\R$ and $Z_{N,\gl,h}^{{\rm cop},\go}$ have the same interpretation as in the pinning model, and
\begin{equation}\label{eq:hanncop}
	h_a^{{\rm cop}}(\lambda) := \frac{1}{2\gl} \, \Lambda(-2\gl) =
	\lambda + o(\lambda) \quad \text{(as $\lambda \downarrow 0$)}\,
\end{equation}
is the corresponding {\it annealed} critical point. 
The constrained partition function for the copolymer model is given by
 \begin{equation}\label{eq:Zcopc}
\begin{split}
	Z_{N,\gl,h}^{{\rm cop},\rc,\omega} & \,=\,
	\E\Big[ e^{-2\gl\sum_{n=1}^N(\omega_n +h_a^{{\rm cop}}(\lambda) 
	- h) \ind_{\{\varepsilon_n = -1\}} }
	\ind_{\{N\in \tau\}}\Big] \\
	& \,=\,
	\E\Bigg[ \prod_{j=1}^{|\tau \cap (0,N]|}
	\bigg( \frac{1 + e^{-2\gl\sum_{n=\tau_{j-1}+1}^{\tau_j} 
	(\omega_n +h_a^{{\rm cop}}(\lambda) - h) }}{2} \bigg)
	\ind_{\{N\in \tau\}}\Bigg] \,,
\end{split}
\end{equation}
where $|\tau \cap (0,N]| = \max\{k \ge 0: \, \tau_k \le N\}$ is the number of renewal
points that appear before $N$, and we have integrated the excursions signs.

\begin{remark}\rm\label{rem:para}
The different parametrization of the copolymer model, as compared to the pinning one,
is to conform with most of the existing literature.
To recover the pinning parametrization, it suffices to replace $\omega$ by $-\omega$,
$2\gl$ by $\beta$ and $2\lambda h$ by $h$.
\end{remark}

Many statistical properties of the models can be captured through the (quenched) \emph{free energies}, 
which are defined by
\begin{align} \label{eq:free}
\begin{split}
	\f^{{\rm pin}}(\gb,h) & :=\lim_{N\to\infty}\frac{1}{N}\,\log Z_{N,\gb,h}^{{\rm pin},\go} 
	=\lim_{N\to\infty}\frac{1}{N}\, \bbE \log Z_{N,\gb,h}^{{\rm pin},\go} \,, \\
	\f^{{\rm cop}}(\gl,h) & :=\lim_{N\to\infty}\frac{1}{N}\,\log Z_{N,\gb,h}^{{\rm cop},\go} 
	=\lim_{N\to\infty}\frac{1}{N}\, \bbE \log Z_{N,\gl,h}^{{\rm cop},\go} \,,
\end{split}
\end{align}
where the limits exist $\bbP$-a.s.\ and remain unchanged if we replace the partition functions by 
their constrained counterparts (see \cite[Ch. 4]{cf:G1}).
From the definition of the partition functions, by restricting the expectation $\e$
to the event $\{\tau_1 > N\}$ (that is $\{S_n> 0 \ \text{for}\  n=1,2,...,N\}$
in the walk interpretation) and observing that $\log \p(\tau_1 > N) = O(\log N)$, by \eqref{eq:ass},
it follows that the free energies are nonnegative.
A (quenched) localization-delocalization transition can be determined from the critical curves
\begin{equation}
h_c^{{\rm pin}}(\gb):=\sup\{h\colon \ \f^{{\rm pin}}(\gb,h)=0\} \quad \mbox{and} \quad
h_c^{{\rm cop}}(\gl):=\sup\{h \colon \ \f^{{\rm cop}}(\gl,h)=0\}.
\end{equation}
The meaning of these critical curves is that, when $h$ exceeds the critical value,
the free energy is strictly positive and the polymer 
puts a positive fraction of its monomers at $\{0\}$, in the pinning model
(resp. in $\{0,-1,-2,...\}$, in the copolymer model);
on the other hand, these fractions equal $0$ when $h$ is below the critical value.
We refer to \cite{cf:G1,cf:dH} for details.

The effect of the disorder is best seen through comparison of the (quenched) models with 
their {\it annealed} counterparts. In particular, the annealed free energies are defined by
\begin{eqnarray*}
	\f^{{\rm pin}}_a(\gb,h):=\lim_{N\to\infty}\frac{1}{N}\,\log \bbE Z_{N,\gb,h}^{{\rm pin},\go} 
	\quad 	\text{and} \quad \f^{{\rm cop}}_a(\gl,h):=\lim_{N\to\infty}\frac{1}{N}\,
	\log \bbE Z_{N,\gb,h}^{{\rm cop},\go} \,.
\end{eqnarray*}
The annealed models are exactly solvable and their phase diagrams can be completely determined 
(see e.g.~\cite{cf:G1}). In particular, the critical value of $h$, above
which the annealed free energy is strictly positive, is $h = 0$, both for the pinning
and copolymer model. This is simply 
because we have subtracted the ``true'' annealed critical points $h_a^{{\rm pin}}(\beta)$ and
$h_a^{{\rm cop}}(\beta)$ from $h$ in the definition of our models, for later convenience. 
For the pinning model, it turns out that when the quenched critical curve $h_c^{{\rm pin}}(\beta)$ 
is \emph{strictly positive} for any small $\beta$, the order of the phase transition is strictly larger 
(except for possibly the marginal case $\alpha=1/2$) than that of the annealed model, in which 
case disorder is said to be {\em relevant}. On the other hand, when $h_c^{{\rm pin}}(\beta)=0$ for 
small values of $\beta$, the order of the phase transition does not change from that of the 
annealed model, in which case disorder is said to be {\em irrelevant}.  For a more detailed discussion 
on disorder relevance vs irrelevance, see \cite{cf:G2}.


It has been shown in~\cite{cf:BGLT,cf:T1} that for the copolymer model disorder is 
relevant for every $\alpha > 0$,
regardless of the underlying renewal process $\tau$ (satisfying the above
mentioned assumptions). On the other hand, for the pinning model, it is known that disorder 
is irrelevant when $\ga<1/2$ or 
when $\ga=1/2$ and $\sum_{n\geq 1}1/n\phi(n)^2<\infty$ (cf. \cite{cf:A2,cf:AZ2,cf:T2,cf:CdH,cf:L}),
relevant when $\alpha>1/2$, and believed to be also relevant (almost confirmed 
in~\cite{cf:AZ,cf:GLT1,cf:GLT2}) when $\ga=1/2$ and $\sum_{n\geq 1}1/n\phi(n)^2=\infty$. 
See~\cite{cf:G2} for an overview.

\subsection{The main results}

A fundamental problem for random pinning and copolymer models, 
when disorder is relevant, is the asymptotic behavior of the critical curves
in the \emph{weak coupling regime $\beta, \lambda\downarrow 0$}. 
The interest of this question lies in the belief
that such asymptotic behavior should be \emph{universal}, i.e., not depend
too much on the fine details of the model.

\smallskip

For the copolymer model, the behavior of the critical curve $h_c^{{\rm cop}}(\gl)$
for small $\lambda$ has been investigated extensively. 
In the seminal paper~\cite{cf:BdH}, Bolthausen and den Hollander
investigated the special copolymer model
in which $S = \{S_n\}_{n\geq 1}$ is the simple symmetric random walk on $\Z$
and the disorder variables $\omega_n$ are $\{\pm 1\}$-valued and symmetric.
They were able to show that the slope of the critical 
curve $\lim_{\gl\downarrow 0} h_c^{{\rm cop}}(\gl)/\gl$ exists
and coincides with the critical point of a \emph{continuum copolymer model}
(in which the walk $S$ is replaced by a Brownian motion and the
disorder sequence $\omega$ is replaced by white noise).
This result was recently extended by Caravenna and Giacomin~\cite{cf:CG}:
for the general class of copolymer models that we consider
in this paper, in the case $\ga\in(0,1)$,
it was shown that the slope of the critical curve exists and
is a \emph{universal quantity}, namely it is
the critical point of a suitable $\alpha$-continuum copolymer model.
In particular, the slope \emph{depends only on $\alpha$} and not on
finer details of the renewal process $\tau$ and disorder $\omega$.
For consistency with the literature, we define (recall \eqref{eq:hanncop})
\begin{equation} \label{eq:slope}
	m_\alpha \,:=\, \lim_{\gl\downarrow 0} \frac{h_a^{{\rm cop}}(\lambda)
	- h_c^{{\rm cop}}(\lambda)}{\gl} \,=\, 1 - 
	\lim_{\gl\downarrow 0} \frac{h_c^{{\rm cop}}(\lambda)}{\gl} \,.
\end{equation}

The precise value of $m_\alpha$, in particular for $\alpha=1/2$,
has been a matter of a long debate. It was conjectured
by Monthus in \cite{cf:M},
on the ground of non-rigorous renormalisation arguments, that $m_{1/2} = 2/3$,
and a generalization of the same argument 
yields the conjecture
$m_\alpha = 1/(\ga+1)$. The rigorous lower bound $m_\alpha \ge 1/(\ga+1)$,
for every $\alpha \ge 0$, was proved by Bodineau and Giacomin in \cite{cf:BG}.
Very recently, it was shown by Bolthausen, den Hollander
and Opoku \cite{cf:BdHO} that this lower bound is strict
for every $\alpha \in (0,\infty)$, thus ruling out Monthus' conjecture
(see also \cite{cf:BGLT} for earlier, partial results, and
\cite{cf:CGG} for a related numerical study).

In this work, we focus on the case $\alpha > 1$. Let us stress that this case was not considered
in \cite{cf:CG}, because no non-trivial continuum model is expected to exist, due to the finite mean
of the underlying renewal process. This consideration might even cast doubts on the 
(existence and) universality of the limit in \eqref{eq:slope}.
However, it was recently proved in \cite{cf:BdHO} that 
\begin{equation} \label{eq:liminff}
	\liminf_{\gl\downarrow 0} \frac{h_a^{{\rm cop}}(\lambda)-h_c^{{\rm cop}}(\lambda)}{\gl}
	\,\ge\, \frac{2+\ga}{2(1+\ga)} \,, \qquad \forall \alpha \in (1,\infty) \,.
\end{equation}
(The ``critical curve'' in \cite{cf:BdHO} corresponds to 
$h_a^{{\rm cop}}(\lambda)-h_c^{{\rm cop}}(\lambda)$ in our notation;
furthermore, our exponent $\alpha$ is what they call $\alpha - 1$, hence the right
hand side in \eqref{eq:liminff} reads $\frac{1+\alpha}{2\alpha}$ in~\cite{cf:BdHO}.)

The universal lower bound \eqref{eq:liminff}, depending only on $\alpha$, led naturally
to the conjecture \cite{cf:BdHO} that $m_\alpha$ exists also for $\alpha > 1$
and coincides with the right hand side of \eqref{eq:liminff}.
Our first main result proves this conjecture,
establishing in particular the universality of the slope.

\begin{theorem}\label{thmcop}
For any copolymer model defined as above, with $\alpha>1$, the limit
in \eqref{eq:slope} exists and equals $m_\alpha = \frac{2+\ga}{2(1+\ga)}$.
Equivalently,
\begin{equation}\label{eq:copthm}
\lim_{\gl\downarrow 0} \frac{h_c^{{\rm cop}}(\lambda)}{\lambda} =\frac{\ga}{2(1+\ga)}.
\end{equation}
\end{theorem}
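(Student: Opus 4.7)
The matching lower bound $m_\alpha \ge (2+\alpha)/(2(\alpha+1))$ is already established in \cite{cf:BdHO} (see \eqref{eq:liminff}), so the content of Theorem~\ref{thmcop} is the complementary upper bound on the slope. Equivalently, fixing $M > (2+\alpha)/(2(\alpha+1))$ and setting $h = h_a^{\rm cop}(\lambda) - M\lambda$, I need to show
$$\f^{\rm cop}(\lambda,h) = 0 \qquad \text{for all } \lambda \text{ small enough}.$$
My plan is to combine the \emph{fractional moment method with coarse graining and a change of measure on the disorder}, a strategy developed for pinning-type models in, e.g., \cite{cf:T2,cf:GLT1,cf:CG}.

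The integrated representation \eqref{eq:Zcopc} shows that $Z_N^{\rm cop,\rc,\omega}$ has the structure of a pinning-type partition function with random rewards $\log\bigl((1+e^{-2\lambda X_j})/2\bigr)$ attached to each renewal interval $(\tau_{j-1},\tau_j]$, where $X_j := \sum_{n \in (\tau_{j-1},\tau_j]}(\omega_n + h_a^{\rm cop}(\lambda) - h)$. For $\gamma \in (0,1)$, the goal is to prove $\sup_N \bbE\bigl[(Z_{N,\lambda,h}^{\rm cop,\rc,\omega})^\gamma\bigr] < \infty$, which implies $\f^{\rm cop}(\lambda,h) = 0$ by Jensen. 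The first step is coarse graining: partition $\{1,\dots,N\}$ into blocks $B_i = \{(i-1)\ell+1,\dots,i\ell\}$ whose size $\ell = \ell(\lambda)$ diverges as $\lambda \downarrow 0$ but with $\ell\lambda^2$ bounded, so that the Radon--Nikodym cost of a change of measure at scale $\lambda$ remains controlled. Expanding $Z_N^{\rm cop,\rc,\omega}$ by specifying in each block the positions of the first and last renewal points and applying the subadditivity $(a_1+\cdots+a_k)^\gamma \le a_1^\gamma+\cdots+a_k^\gamma$ reduces the bound on $\bbE\bigl[(Z_N^{\rm cop,\rc,\omega})^\gamma\bigr]$ to a renewal-type sum over block configurations of products of per-block fractional moments.

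The central estimate is the per-block bound. I would change measure on $\{\omega_n\}_{n \in B_i}$ by shifting each coordinate by a deterministic amount $\delta = \delta(\lambda)$ of order $\lambda$. Under the tilted law $\tilde\bbP_i$ the drift of each $X_j$ inside the block increases, which pushes each factor $(1+e^{-2\lambda X_j})/2$ close to its delocalized value $\tfrac12$ and makes the tilted annealed partition function $\tilde\bbE_i[Z_{B_i}^{\rm cop,\rc,\omega}]$ small. H\"older's inequality yields
$$\bbE\bigl[(Z_{B_i}^{\rm cop,\rc,\omega})^\gamma\bigr] \le \Bigl(\tilde\bbE_i\bigl[(\dd\bbP/\dd\tilde\bbP_i)^{\gamma/(1-\gamma)}\bigr]\Bigr)^{1-\gamma}\bigl(\tilde\bbE_i[Z_{B_i}^{\rm cop,\rc,\omega}]\bigr)^{\gamma},$$
in which the Radon--Nikodym factor is $\exp(O(\ell\delta^2)) = O(1)$ by the choice of scales, while the tilted annealed expectation is computed exactly via the excursion-by-excursion manipulation in \eqref{eq:Zcopc}. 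Recombining these bounds and summing over block patterns, the polynomial tail $K(n) \sim \phi(n)/n^{1+\alpha}$ produces a convergent renewal-type series precisely when $M > (2+\alpha)/(2(\alpha+1))$.

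The main obstacle is the sharp calibration of $\gamma$, $\ell$ and the tilting strength $\delta$ required to recover \emph{exactly} the universal constant $(2+\alpha)/(2(\alpha+1))$ rather than a weaker bound. This forces a careful balance between the Radon--Nikodym cost and the multiplicative gain from tilting, whose $\lambda \downarrow 0$ limit should exhibit a nontrivial extremum at the conjectured value. The structure of the constant itself points toward this balance: the $\tfrac12$ reflects the Gaussian symmetry of the disorder sum over a single critical excursion of length $L \sim \lambda^{-2}$, while the additional term $1/(2(\alpha+1))$ encodes the entropic cost of locating such a long excursion inside a block, governed by the renewal tail exponent $\alpha$. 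A secondary technical issue is to handle block-boundary effects without degrading the optimal constant, which likely requires absorbing those boundary terms into the renewal function $K(\cdot)$ through a finer decomposition.
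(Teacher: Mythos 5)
Your overall strategy — fractional moments combined with coarse graining and an exponential tilt of the disorder — is exactly what the paper carries out in Section~\ref{sec4} (mirroring Section~\ref{sec2} for the pinning case), and you correctly reduce the theorem to showing $\f^{\rm cop}(\lambda,h)=0$ for $h/\lambda$ just below $\alpha/(2(1+\alpha))$, citing \cite{cf:BdHO} for the converse inequality. However, the proposal stops precisely where the theorem's content begins: the ``sharp calibration'' you flag as the ``main obstacle'' is not a technicality to be tidied up afterwards — it \emph{is} the proof, and the two mechanisms that actually produce the constant $\alpha/(2(1+\alpha))$ are not identified.

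The first missing mechanism is how the fractional-moment exponent $\zeta$, the tilt strength $\delta$, and the slope $c=h/\lambda$ interact. The coarse-grained sum over block configurations, driven by the tail $K(n)\sim\phi(n)/n^{1+\alpha}$, converges only if $(1+\alpha)\zeta>1$, so $\zeta$ must be taken just above $1/(1+\alpha)$; and after H\"older, optimizing the quadratic trade-off between the Radon--Nikodym cost $\exp\{\tfrac{\zeta}{1-\zeta}\tfrac{\delta^2 k}{2}\}$ and the tilted annealed factor forces $\delta\propto(1-\zeta)\lambda$ and gives per-block decay only when $c<(1-\zeta)/2$ (in the copolymer normalization). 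Sending $\zeta\downarrow 1/(1+\alpha)$ turns $(1-\zeta)/2$ into exactly $\alpha/(2(1+\alpha))$. Your heuristic for the constant (``Gaussian symmetry'' giving the $1/2$, ``entropic cost of one long excursion'' giving the rest) does not correspond to this mechanism and would not guide the computation to the right answer. The second missing mechanism is the block-boundary issue, which cannot be handled by ``absorbing boundary terms into $K(\cdot)$''. To get decay in $k$, the tilted annealed Green's function $\tilde u(\cdot)$ must be dominated by a \emph{small} multiple of $u(\cdot)=\P(\cdot\in\tau)$, and this comparison is only useful on intervals of length close to the full block; one therefore has to show that inside any occupied block whose neighbours are also occupied, the dominant contribution comes from renewal configurations whose jumps across block boundaries have length $O(\epsilon k)$, so that the trajectory essentially fills the block. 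That reduction (STEP~3 of Section~\ref{S:coarsegraining}) uses $\alpha>1$ in an essential way, via $\sum_{n>\epsilon k}K(n)=O(k^{-(\alpha-1)})\to 0$, and it is the step the whole argument hinges on — it fails for $\alpha\le 1$. Blocks adjacent to empty blocks escape this mechanism and require the cruder bound $\tilde u\le u$, whose cost must be charged to the gaps by a separate combinatorial count (STEP~4). Without carrying out these two refinements, the argument cannot reach the target constant.
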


\begin{remark}\rm
In a work in progress~\cite{cf:CSZ}, the partition function of 
the copolymer model under weak coupling is shown to converge, for every $\alpha > 1$,
to an explicit ``trivial''
continuum limit, the exponential of a Brownian motion with drift,
\emph{which carries no dependence on $\alpha$}. In particular, 
the continuum limit of the partition function gives no information on the slope of the critical 
curve, which is in stark contrast to the case $\alpha \in (0,1)$.
\end{remark}

For the random pinning model with $\alpha > 1$,
rough upper and lower bounds of the order $\beta^2$ are known for the critical 
curve $h_c^{{\rm pin}}(\beta)$, cf. \cite{cf:AZ,cf:DGLT}.
(The quadratic, rather than linear, behavior is simply
due to the different way the parameters $(\beta, h)$ and $(\lambda, h)$ appear
in the two models, cf. Remark~\ref{rem:para}). 
We sharpen these earlier results by establishing the following analogue of Theorem~\ref{thmcop}.

\begin{theorem}\label{thmpin}
For any random pinning model defined as above, with $\alpha>1$, we have
\begin{equation} \label{eq:CSZ+}
	\lim_{\beta \downarrow 0} \frac{h_c^{{\rm pin}}(\beta)}{\beta^2} \,=\,
	\frac{\alpha}{1+\alpha} \, \frac{1}{2 \mu} \,,
\end{equation}
where $\mu:=\e[\tau_1]$.
\end{theorem}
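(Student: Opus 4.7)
The proof of \eqref{eq:CSZ+} requires matching asymptotic upper and lower bounds on $h_c^{{\rm pin}}(\beta)$: we must show both $\limsup_{\beta\downarrow 0} h_c^{{\rm pin}}(\beta)/\beta^2 \le \frac{\alpha}{(1+\alpha) 2\mu}$ (i.e., that $\f^{{\rm pin}}(\beta, h) > 0$ for $h$ slightly above the claimed threshold) and the reverse $\liminf \ge$ (i.e., that $\f^{{\rm pin}}(\beta, h) = 0$ for $h$ slightly below). The overall strategy closely parallels Theorem~\ref{thmcop}, relying on the by-now standard toolbox of fractional moments with coarse graining. Rough upper and lower bounds of the correct $\beta^2$ order are already available from~\cite{cf:AZ,cf:DGLT}, so the real content is pinning down the sharp constant.

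For the \emph{upper bound on $h_c^{{\rm pin}}$}, the natural route is a rare-stretch / targeted construction. Choose a block scale $N = N(\beta)$ and a level $m = m(\beta)$, and call a block of length $N$ ``good'' if the empirical mean of $\go$ on it exceeds $m$; by Cram\'er, such blocks occur at typical spacing $L \sim \exp(N m^2 / 2)$. Restrict the partition function to renewal trajectories that concentrate their contacts inside consecutive good blocks, each connected to the next by a single long jump of order $L$. Since each long jump carries probability $K(L) \sim L^{-(1+\alpha)}$, the log-probability cost per unit length is of order $-(1+\alpha)(\log L)/L$, while the in-block Hamiltonian gain at the bulk contact density $1/\mu$ is of order $(N/\mu)(\beta m + h - h_a^{{\rm pin}}(\beta))$ per good block. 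Jointly optimizing over $m$ and $N$, combined (when necessary) with an optimization over the number of in-block contacts via the Legendre transform of the renewal large-deviation rate function, and substituting $h_a^{{\rm pin}}(\beta) = \beta^2/2 + o(\beta^2)$ from \eqref{eq:hannpin}, one recovers positivity of $\f^{{\rm pin}}$ for $h$ above the claimed threshold.

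The \emph{lower bound on $h_c^{{\rm pin}}$} is the principal technical step, proved via the fractional moment method together with coarse graining and a well-chosen change of measure. Fix $\gamma \in (0,1)$; Jensen's inequality gives
$$ \f^{{\rm pin}}(\beta, h) \,\le\, \frac{1}{\gamma} \limsup_{N \to \infty} \frac{1}{N} \log \bbE\bigl[ (Z_{N,\beta,h}^{{\rm pin},\go})^\gamma \bigr] \,, $$
so the goal is to show that the right-hand side is non-positive for $h$ below the critical threshold. Splitting $[1, N]$ into blocks of coarse-graining length $\ell = \ell(\beta)$ and decomposing $Z_N^{{\rm pin},\go}$ as a sum indexed by the subset of blocks actually visited by $\tau$, one exploits the subadditivity $(\sum x_i)^\gamma \le \sum x_i^\gamma$ to reduce matters to controlling a single-block fractional moment $\bbE[(Z_\ell^{{\rm pin},\go})^\gamma]$. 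One then applies H\"older's inequality relative to a tilted disorder law $\widetilde{\bbP}$, under which each $\go_n$ inside the block is shifted by $-\delta$, to get a bound of the form
$$ \bbE\bigl[(Z_\ell^{{\rm pin},\go})^\gamma\bigr] \,\le\, \bigl(\widetilde{\bbE}[Z_\ell^{{\rm pin},\go}]\bigr)^\gamma \cdot \exp\Big(\frac{\gamma}{1-\gamma}\, \frac{\ell \delta^2}{2}\Big) \,, $$
the last factor being the standard Gaussian relative-entropy cost. The tilted annealed quantity $\widetilde{\bbE}[Z_\ell^{{\rm pin},\go}]$ can be expressed in terms of the renewal function of $\tau$ with an effective bias $h-\beta\delta$, and decays appropriately once $\delta > h/\beta$. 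Tuning the free parameters $\ell$, $\delta$, $\gamma$ to the scales dictated by the rare-stretch heuristic (roughly $\ell \sim 1/\beta^2$, $\delta \sim \beta$) makes the resulting single-block bound compatible with the combinatorial entropy of the coarse-graining decomposition, and yields the matching sharp constant.

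The main obstacle is the extraction of the \emph{exact} constant $\frac{\alpha}{(1+\alpha)2\mu}$ rather than merely the correct $\beta^2$ scaling: the two estimates above must quantitatively match at leading order in $\beta$, which tightly constrains the choices of block size, tilt magnitude, and fractional exponent. A further technical subtlety, already present for $\alpha \in (1,2]$, is that the underlying renewal process has infinite variance, so the renewal large-deviation and local limit estimates governing the number of contacts in a block need treatment beyond the classical Cram\'er regime, relying on the heavy-tailed renewal techniques which are themselves a central ingredient of this paper.
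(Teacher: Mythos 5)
Your treatment of the \emph{lower bound on $h_c^{{\rm pin}}$} is essentially the paper's Section~2: fractional moments, coarse graining into blocks of length $\ell\sim t/\beta^2$, tilting the disorder by $-\delta$ with $\delta\sim a\beta$, H\"older to split entropy cost from tilted annealed partition function, and tuning $a$, $t$, $\gamma$. Two refinements that you gloss over are in fact where the sharp constant comes from: (i) the fractional exponent must satisfy $\gamma>1/(1+\alpha)$ for the coarse-grained renewal sum to converge, and optimizing the tilt at $a=(1-\gamma)/\mu$ then forces $c<(1-\gamma)/(2\mu)\to\alpha/(2(1+\alpha)\mu)$; (ii) one must show that in each occupied block the renewal traverses a fraction of the block close to $1$ (not merely a positive fraction, as in the standard scheme of \cite{cf:DGLT,cf:G2}), which is exactly where $\alpha>1$ enters. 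These are fixable omissions of detail, not wrong ideas.

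The \emph{upper bound on $h_c^{{\rm pin}}$} is where your proposal has a genuine gap. You evaluate the gain in a good block by the linear (first-order in $\omega$) estimate $(N/\mu)(\beta m+h-h_a^{{\rm pin}}(\beta))$, i.e.\ the expected Hamiltonian at contact density $1/\mu$. Carrying out your own energy--entropy optimization with $m=\delta\beta$, $h=c\beta^2$ gives positivity of the free energy only for $c>\tfrac12-\tfrac{1}{2(1+\alpha)\mu}$, which is \emph{strictly larger} than the target $\tfrac{\alpha}{2(1+\alpha)\mu}=\tfrac{1}{2\mu}-\tfrac{1}{2(1+\alpha)\mu}$ since $\mu>1$. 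The missing ingredient is the second-order (fluctuation) contribution to the in-block quenched free energy: one needs the quantitative bound $\liminf_{\beta\downarrow0}\f(\beta,c\beta^2)/\beta^2\ge\frac{1}{\mu}\big(c-\frac{1}{2\mu}\big)$, whose extra term $\frac{1}{2\mu}\big(1-\frac{1}{\mu}\big)$ comes from the nonlinearity of $\log Z$ in $\omega$; a plain Jensen bound on the linear Hamiltonian cannot see it (the paper obtains it by partially annealing over every block of $q$ consecutive returns, so that the coarse-grained Hamiltonian is genuinely nonlinear in $\omega$, and then computing the resulting variance term $\frac q2-\frac12\sum_n u_{N,q}(n)^2$). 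Moreover, the paper does not actually implement the rare-stretch construction: it couples this free-energy lower bound with the smoothing inequality $\f(\beta,h)\le\frac{1+\alpha}{2}(1+o(1))(h-h_c(\beta))^2/\beta^2$ with the \emph{sharp} constant $(1+\alpha)/2$ (Theorem~\ref{smoothing}, from \cite{cf:Ca}), and a discriminant argument on the resulting quadratic inequality in $c$ yields $\limsup h_c(\beta)/\beta^2\le\frac{\alpha}{2(1+\alpha)\mu}$. Your rare-stretch route could in principle be made to work, but only after replacing your linear in-block estimate by the correct quenched one; as written it proves positivity of the free energy only above a non-matching threshold. (A minor further point: your closing worry about infinite-variance renewal large deviations is not an issue here --- the only large-deviation input needed concerns the upper tail of the counting process $|\tau\cap\tilde\tau\cap(0,N]|$, which stays in the classical Cram\'er regime.)
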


\noindent
Thus, the asymptotic behavior of the critical curve of the random pinning model
with $\alpha > 1$ is also universal, in the sense that it depends only on the exponent $\alpha$
and on the mean $\mu$ of the underlying renewal process, and not on the finer details of 
the renewal process or the disorder distribution.

\begin{remark}\rm
For the random pinning model with $\alpha>1$, it is also shown in~\cite{cf:CSZ} that the 
partition function under weak coupling converges, in the continuum limit, to the exponential of 
a Brownian motion with drift, which depends on $\mu$ \emph{but not on $\alpha>1$}.
As a consequence, the continuum limit gives no information on the asymptotic
behavior \eqref{eq:CSZ+}.
\end{remark}

The fact that we can prove the same type of result for the random pinning and copolymer models 
is not unexpected for $\alpha > 1$. In fact,
when the underlying renewal process has finite mean, 
there is typically a positive fraction of monomers
interacting with the disorder for both models,
since $\frac{1}{N} \sum_{n=1}^N1_{\{n\in\tau\}}$ and 
$\frac{1}{N} \sum_{n=1}^N 1_{\{\epsilon_n = -1\}}$
(recall \eqref{eq:defpin} and \eqref{eq:cop})
tend to a positive constant as $N\to\infty$ ($1/\mu$ and $1/2$, respectively).
Also, the annealed free energies of both models are proportional 
to the square of the coupling constants $\beta$, respectively $\gl$, if the bias $h$ is also 
scaled properly (proportional to $\gb^2$ for the pinning model and to $\gl$ for the 
copolymer model). This fact is then reflected in the (annealed) correlation length of the 
system, which plays a central role in the coarse graining analysis we will carry out later.

When $\alpha<1$, the above analogies between the random pinning and copolymer models break down. 
In \cite{cf:AZ} it was shown that for $\ga\in(1/2,1)$, $h_c^{{\rm pin}}(\gb)$ satisfies the bounds
\begin{eqnarray*}
c^{-1} \gb^{\frac{2\ga}{2\ga-1}}\psi(\gb^{-2}) \leq h_c^{{\rm pin}}(\gb)\leq 
c \gb^{\frac{2\ga}{2\ga-1}}\psi(\gb^{-2})
\end{eqnarray*}
for some $c>0$ and a slowly varying function $\psi(\cdot)$, related in an explicit way 
to $\phi(\cdot)$. It is then reasonable to conjecture that, in analogy
with the universality of the slope of the critical curve for the copolymer model, the limit
\begin{equation} \label{eq:univ<1}
	\lim_{\gb\downarrow 0} \frac{h_c^{{\rm pin}}(\beta)}{\gb^{\frac{2\ga}{2\ga-1}}\psi(\gb^{-2})}
\end{equation}
exists and is also universal. The method we use in 
this paper falls short in answering this question. However, the random pinning model under 
weak coupling does admit a continuum limit when $\alpha \in (1/2, 1)$, which is currently 
under construction in~\cite{cf:CSZ}. This gives hope to prove the existence and
universality of the limit in \eqref{eq:univ<1}.

\subsection{Organization and main ideas}\label{S:outline}

We present the proof of Theorem~\ref{thmpin}, concerning the random pinning model, in 
Sections~\ref{sec2} (lower bound) and~\ref{sec3} (upper bound). The proof of 
Theorem~\ref{thmcop}, concerning the copolymer model, follows the 
same line of arguments ---in fact, the upper bound is 
significantly easier in this case--- so we only sketch the proofs and highlight the differences 
in Sections~\ref{sec4} and~\ref{sec5}. 

\begin{remark}\rm
The upper bound on $h_c^{{\rm cop}}(\lambda)$ in relation \eqref{eq:copthm} for the copolymer
is the same as the lower bound \eqref{eq:liminff}, which was established in \cite{cf:BdHO}
as an application of a quenched large deviations principle, developed 
by the authors and their collaborators. 
Here we present an alternative and self-contained proof, which is remarkably short
(see Section~\ref{sec5}).
\end{remark}

The proof of the lower bound on $h_c(\cdot)$ is based on a refinement of the ``fractional moment and 
coarse graining'' method, developed in \cite{cf:DGLT, cf:GLT2}, see~\cite{cf:G2} for an overview. The 
standard application of this moment method makes use of a change of measure, which via 
the use of H\"older's inequality gives rise to an energy-entropy balance. In 
\cite{cf:DGLT, cf:GLT2}, the entropy factor can be bounded by an arbitrary constant, while the 
energy factor can be made arbitrarily small. Our refinement requires  optimizing this 
energy-entropy balance, which is crucial in obtaining the precise constants. We also need a 
refinement in the coarse graining procedure. In the standard application, the polymer only needs 
to place a positive fraction of monomers
at the interface in each visited coarse-grained block, while 
in our case, we need to ensure that this positive fraction is in fact close to $1$. For this step, 
$\alpha>1$ plays a crucial role.

\smallskip

The upper bound on $h_c(\cdot)$ makes use of the following smoothing inequality
(the distinction between the pinning and copolymer, as usual, is simply
due to their different parametrization).

\begin{theorem}\label{smoothing} 
There exists a constant $\epsilon_0 > 0$ and a continuous map
$(\beta,\delta) \mapsto A_{\beta,\delta}$ from $(0,\epsilon_0) \times (-\epsilon_0,\epsilon_0)$
to $(0,\infty)$, depending only on the disorder distribution and
such that $\lim_{(\gb,\gd)\to(0,0)} A_{\gb,\gd} =1$, with the following properties:
\begin{itemize}
\item for the pinning model,
for every $0 < \gb < \epsilon_0$ and $|t| < \beta \epsilon_0$
\begin{equation} \label{eq:smooth}
	0\leq \f^{{\rm pin}}(\beta, h^{\rm pin}_c(\gb)+t) \,\le\, 
	\frac{1+\alpha}{2} A_{\gb,\frac{t}{\beta}} \,\frac{t^2}{\beta^2} \,;
\end{equation}
\item for the copolymer model, 
for every $0 < \lambda < \epsilon_0$ and $|\delta| < \epsilon_0$, 
\begin{equation} \label{eq:smoothcop}
	0\leq \f^{{\rm cop}}(\lambda, h^{\rm cop}_c(\gl)+\gd) \,\le\, \frac{1+\alpha}{2} 
	A_{\lambda,\gd}\,\gd^2 \,.
\end{equation}
\end{itemize}
\end{theorem}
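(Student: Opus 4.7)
The plan is to adapt the Giacomin--Toninelli smoothing-inequality strategy (see e.g.\ \cite{cf:G1}), refined to extract both the sharp prefactor $(1+\alpha)/2$ and the continuous correction $A_{\beta,\delta}\to 1$ for general disorder \eqref{eq:assdis}. I describe the argument for the pinning bound \eqref{eq:smooth}; the copolymer bound \eqref{eq:smoothcop} follows by the parametrization dictionary of Remark~\ref{rem:para} (under which $\delta^2 \leftrightarrow (t/\beta)^2$).

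Let $F := \f^{{\rm pin}}(\beta, h^{{\rm pin}}_c(\beta) + t)$; the case $F \le 0$ is trivial. For a block length $\ell = \ell(\beta, t) \to \infty$ to be optimized, introduce on $(\omega_1, \ldots, \omega_\ell)$ the exponential-family tilt $\bbQ_\ell := \bbP_\ell^{\sigma}$, with per-variable Radon--Nikodym density $e^{\sigma\omega - \Lambda(\sigma)}$. Choose $\sigma = \sigma(\beta, t)$ so as to shift the effective bias of $Z_{\ell, \beta, h^{{\rm pin}}_c + t}^{{\rm pin}, {\rm c}, \omega}$ from $h^{{\rm pin}}_c(\beta) + t$ down to the quenched critical value $h^{{\rm pin}}_c(\beta)$: this requires $\Lambda(\beta + \sigma) - \Lambda(\sigma) - \Lambda(\beta) = -t$, which by \eqref{eq:asLambda} gives $\sigma = -t/\beta + o(t/\beta)$ and
\[
H(\bbQ_\ell | \bbP_\ell) \;=\; \ell\bigl[\sigma\Lambda'(\sigma) - \Lambda(\sigma)\bigr] \;=\; \frac{\ell\,t^2}{2\beta^2}\bigl(1 + o(1)\bigr)
\]
as $(\beta, t/\beta) \to (0,0)$. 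The $(1+o(1))$ errors, together with the renewal-theoretic sub-leading contributions appearing below, produce the continuous factor $A_{\beta, t/\beta} \to 1$ (continuity coming from continuity of $\Lambda, \Lambda', \Lambda''$ and uniform control of the Taylor remainder in \eqref{eq:asLambda}).

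The central technical step is a block inequality of the form
\[
\ell F \;\leq\; (1+\alpha)\cdot H(\bbQ_\ell | \bbP_\ell)\,(1 + o(1)) \,+\, o(\ell),
\]
which upon division by $\ell$ yields \eqref{eq:smooth}. To establish it, I would apply the Donsker--Varadhan variational formula $\bbE^{\bbP_\ell}[\log Z] \leq H(\bbP_\ell | \bbQ_\ell) + \log \bbE^{\bbQ_\ell}[Z]$ with $Z = Z_{\ell, \beta, h^{{\rm pin}}_c + t}^{{\rm pin}, {\rm c}, \omega}$, combined with two ingredients: (i) by the choice of $\sigma$, the $\bbQ_\ell$-annealed computation of $Z$ is at the quenched critical bias $h^{{\rm pin}}_c$, where $\f^{{\rm pin}}(\beta, h^{{\rm pin}}_c) = 0$; a concentration-of-measure estimate for $\log Z^{{\rm pin}, {\rm c}, \omega}$ with Gaussian-type tails at rate $O(\ell\beta^2)$ (available under \eqref{eq:assdis}) then controls $\bbE^{\bbQ_\ell}[Z]$ by the \emph{quenched} criticality rather than the annealed one; (ii) the constrained-endpoint factor $\{\ell \in \tau\}$ in the definition \eqref{eq:Zpinc} contributes, via $K(\ell) \asymp \ell^{-(1+\alpha)}$ from \eqref{eq:ass}, precisely the sharp $(1+\alpha)$ prefactor in the block inequality.

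The principal obstacle is step (i): for $\alpha > 1$ the disorder is relevant and $h^{{\rm pin}}_c(\beta) > 0$, so the \emph{annealed} free energy $F^{{\rm pin}}_a(h^{{\rm pin}}_c) > 0$; a naive application of Donsker--Varadhan would replace $\bbE^{\bbQ_\ell}[Z]$ by an annealed partition function growing as $e^{\ell F^{{\rm pin}}_a(h^{{\rm pin}}_c)}$, destroying the sharp bound. The Giacomin--Toninelli refinement circumvents this by exploiting quenched criticality through concentration, effectively replacing the strictly positive annealed log-growth rate by the vanishing quenched one. Making this uniform in the small parameters $(\beta, t/\beta) \in (0,\epsilon_0)\times(-\epsilon_0,\epsilon_0)$, and tracking the continuity of the renewal-theoretic subleading contributions, delivers the continuous correction $A_{\beta,\delta}\to 1$.
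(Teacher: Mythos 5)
A preliminary remark: the paper does not actually prove Theorem~\ref{smoothing}; it quotes it, citing \cite{cf:G1} (Theorem~5.6) for the Gaussian case and \cite{cf:Ca} for the general statement. So your sketch must be judged against the known Giacomin--Toninelli rare-stretch argument. Measured that way, it contains two genuine gaps.

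First, you misidentify the source of the factor $1+\alpha$. The endpoint constraint $\{\ell\in\tau\}$ contributes $\log K(\ell) = -(1+\alpha)\log\ell + o(\log\ell)$, which is $o(\ell)$ and hence invisible at the scale of the block inequality $\ell F \le (1+\alpha)\,\ell H\,(1+o(1))$; a polynomial prefactor cannot turn into a multiplicative $(1+\alpha)$ in front of $\ell H \approx \ell t^2/(2\beta^2)$. In the actual argument the factor comes from a \emph{multi-block} rare-stretch strategy run at the quenched critical point: one tilts the disorder \emph{upward} so that atypical blocks of length $\ell$ emulate the system at $h_c(\beta)+t$ (the opposite direction from your tilt); such blocks occur at spacing of order $\ell e^{\ell H}$, and the renewal jump over that gap costs $K(\ell e^{\ell H})\approx e^{-(1+\alpha)\ell H}$ by \eqref{eq:ass} --- this is where $1+\alpha$ multiplies $\ell H$. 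The bound $F\le(1+\alpha)H$ then follows because the resulting lower bound on $\f(\beta,h_c(\beta))=0$ must be nonpositive. Your single-block Donsker--Varadhan inequality has no analogue of this jump cost, so even if its other steps went through it would produce the (false, or at least unjustified) constant $1/2$ instead of $(1+\alpha)/2$.

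Second, the fix you propose for the annealed-versus-quenched obstruction does not deliver the sharp constant. Gaussian-type concentration of $\log Z_\ell$ at variance scale $O(\ell\beta^2)$ gives at best $\log\bbE^{\bbQ_\ell}[Z_\ell]\le \bbE^{\bbQ_\ell}[\log Z_\ell]+C\ell\beta^2$ with an unspecified $C$. But in the regime where the theorem is actually applied (Section~\ref{sec:completing}) one has $t\asymp\beta^2$, so the main term $\ell t^2/(2\beta^2)$ is itself of order $\ell\beta^2$: the concentration error is then comparable to the main term with an uncontrolled constant, and the claim $A_{\beta,t/\beta}\to1$ is lost. Since the paper emphasizes that the precise constant $(1+\alpha)/2$ is essential for Theorems~\ref{thmcop} and~\ref{thmpin}, this is not a cosmetic defect but a failure of the proposed route.
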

\noindent
The smoothing inequality was first proved in \cite{cf:GT}, without the precision 
on the constant and under more restrictive assumptions on the disorder. 
In the case of Gaussian disorder, it appears in \cite{cf:G1} with the right constant $(1+\ga)/2$,
cf. Theorem~5.6 and Remark~5.7 therein.
The general statements we use here are proved in \cite{cf:Ca}. 
We remark that the precise (asymptotic) constant $(1+\ga)/2$ is crucial in obtaining the exact 
limits of $h_c^{{\rm cop}}(\gl)/\gl$ and $h_c^{{\rm pin}}(\gb)/\gb^2$. 

The idea to prove the upper bound is to couple the smoothing inequality with a
rough linear (but quantitative) lower 
bound on the free energies. More precisely, we prove that for every $c\in\bbR$,
\begin{equation}\label{eq:flowerbds}
\liminf_{\beta\downarrow 0} \frac{\f^{{\rm pin}}(\beta, c\beta^2)}{\beta^2} 
\geq \frac{1}{\mu}\bigg[c-\frac{1}{2\mu} \bigg]
\quad \text{and}
\quad
\liminf_{\gl\downarrow 0}\frac{\f^{{\rm cop}}(\gl,c\gl)}{\lambda^2}\geq  c -\frac{1}{2}.
\end{equation}
Remarkably, enforcing the compatibility of these inequalities with the corresponding 
smoothing inequalities \eqref{eq:smooth} and \eqref{eq:smoothcop} leads to the sharp upper bound 
on the critical curves.
What actually lies behind this compatibility condition is a rare stretch strategy. Let us try to describe 
it heuristically. We do so in the copolymer case, which is easier.

We start by decomposing $\mathbb{N}=\cup_{i=1}^\infty B_i$ into blocks of length $M$ and 
we search for such blocks
where the sample average of the disorder is about $-\lambda\delta$, that is 
$M^{-1}\sum_{n\in B_\cdot} \omega_n
\simeq -\lambda\delta$, where
$\delta$ is a fixed parameter. 
The probability of a block to have a sample average of that order is roughly
$\exp(-\lambda^2\delta^2 M/2)$ and the reciprocal of this probability will give the number 
of blocks that will separate the
atypical ones. Once these ``atypical blocks'' have been identified, 
we let the polymer jump from the end point of one such block to the
start point of the following. 
In view of \eqref{eq:ass}, the cost for this is
roughly $\exp(-(1+\alpha) \lambda^2\delta^2 M/2)$.
Once at the beginning of an atypical block, the contributions to the free energy of the copolymer is
\begin{eqnarray}\label{rarestretch}
\mathbb{E}\log \E\left[\prod_{j=1}^{\mathcal{N}_M} 
\frac{1}{2}\left(1+e^{ -2\lambda \sum_{n \in (\tau_{j-1}, \tau_j]} 
(\omega_n-\lambda\delta +h_a(\lambda)-h)} \right)\right],
\end{eqnarray}
where $\mathcal{N}_M$ is the number of excursions within the atypical block $B_\cdot$. 
Notice that in the above expectation we have integrated out the signs $\hat\varepsilon$ of the 
excursions of the path, while we have shifted the mean of the disorder to $-\lambda\delta$. 
Setting $h=c\lambda$, applying  Jensen's inequality and a Taylor expansion for small values 
of $\lambda$ gives that  \eqref{rarestretch} is bounded below by
$(c+\delta-\frac{1}{2})\lambda^2 M$, see \eqref{eq:flowerbds}. The energy-entropy balance 
gives the lower bound for the free energy
$$
e^{-\lambda^2 \delta^2 M/2} \, M
\left[ \bigg(c+\delta -\frac{1}{2}\bigg) -(1+\alpha)\frac{\delta^2}{2}\right]\lambda^2.
$$
Finally, optimizing over $\delta$, the term in square brackets
becomes $\big[c-\frac{\alpha}{2(1+\alpha)}\big]$, which leads to the sharp upper 
bound \eqref{eq:copthm} on the critical curve. 

Let us note that rare stretch strategies have been employed extensively in 
the study of pinning and copolymer models, cf. \cite[sections 6.3 and 5.4]{cf:G1} and 
\cite[section 5.1]{cf:G2} for instance. However, in most cases (in particular in the copolymer case) 
the strategy imposed on the copolymer after landing on an atypical block is to sample the whole 
disorder by staying exclusively in a single solvent. Our approach shows that the polymer follows rather 
more sophisticated strategies. In the case of a renewal with finite mean, this is captured by an 
averaging over the signs of the excursions and an optimization of the sample mean of the disorder in 
the targeted blocks. Apparently, this optimization makes the application of Jensen's inequality 
sharp. An analogous rare stretch strategy, but without optimizing over the disorder mean,
was used in \cite{cf:BGLT}.
\smallskip

Finally, a remark on notations. To ease the reading, we will drop the superscripts ${\rm pin}$ 
and ${\rm cop}$ from our notation for the free energy, partition function, and critical curve. 
This should not lead to confusion, since pinning and copolymer models are treated in 
separate sections. Moreover, we will refrain from using the integer parts, that is, instead 
of $\lfloor x \rfloor$ we simply write $x$. It will be clear from the context when 
the integer part of $x$ is used.

\section{On the Pinning Model: Lower Bound}
\label{sec2}

As already mentioned, we use the
``fractional moment and coarse graining'' method (see~\cite{cf:G2}),
but with several crucial refinements, that we now explain.

\subsection{The general strategy} \label{S:warmup}

To obtain a lower bound
on the critical curve $h_c(\beta)$, it suffices to prove $\f(\beta, h)=0$ for suitably chosen $h$ 
as a function of $\beta$. This is further reduced to showing that for some $\zeta \in (0,1)$, we have
\begin{equation} \label{eq:ttopprove1}
	\liminf_{N\to\infty}
	\, \bbE\big[ \big( Z_{N,\beta,h}^{\rc, \omega} \big)^\zeta \big] \,<\, \infty \,.
\end{equation}
Indeed, note that
\begin{equation*}
\begin{aligned}
	\f(\beta, h) =\! \liminf_{N\to\infty}
	\frac{1}{N} \bbE \big[ \log Z_{N,\beta,h}^{\rc, \omega} \big] &\,=\, \liminf_{N\to\infty}
	\frac{1}{N\zeta} \bbE \big[ \log \big( Z_{N,\beta,h}^{\rc, \omega} \big)^\zeta \big] \\
	&\,\le \, \liminf_{N\to\infty}
	\frac{1}{N\zeta} \log \bbE \big[ \big( Z_{N,\beta,h}^{\rc, \omega} \big)^\zeta \big] = 0.
\end{aligned}
\end{equation*}
To obtain \eqref{eq:ttopprove1}, we employ a coarse-graining scheme.
The idea is to divide the system into (large) finite blocks, each one being of size $k$,
the \emph{correlation length} of the annealed model, proportional to $1/\gb^2$.
We estimate the partition functions on different blocks separately, and then ``glue'' these
estimates together through a coarse-graining procedure.

\smallskip

We first estimate the partition function of a system of size $k$,
the coarse-graining length scale.
Let $\tilde\bbP_{-\delta,k}$ denote the law under which $\{\omega_i\}_{1 \le i \le k}$
are i.i.d.\ with density
\begin{equation} \label{eq:tildeP}
	\frac{\dd\tilde\bbP_{-\delta,k}}{\dd\bbP} \,=\,
	\prod_{i=1}^k e^{-\delta \omega_i - \gL(-\gd)},
\end{equation}
which is an exponential tilting of the law of $\{\omega_i\}_{1\leq i\leq k}$. 
We then apply the standard change of measure trick: by H\"older's inequality, 
for any $\zeta \in (0,1)$
\begin{equation} \label{eq:trick}
	\bbE\big[ \big( Z_{k,\beta,h}^{\rc,\omega} \big)^\zeta \big]
	= \tilde\bbE_{-\delta,k}\bigg[
	\big( Z_{k,\beta,h}^{\rc,\omega} \big)^\zeta \, \frac{\dd\bbP}{\dd\tilde\bbP_{-\delta,k}} \bigg]
	\,\le\, \tilde\bbE_{-\delta,k} \big[ Z_{k,\beta,h}^{\rc,\omega} \big]^\zeta
	\, \tilde\bbE_{-\delta,k} \bigg[ \bigg( \frac{\dd\bbP}{\dd\tilde\bbP_{-\delta,k}}
	\bigg)^{\frac{1}{1-\zeta}} \bigg]^{1-\zeta} \,.
\end{equation}
The second factor is easily computed:
\begin{eqnarray}\label{eq:entrp}
	\tilde\bbE_{-\delta,k} \bigg[ \bigg( \frac{\dd\bbP}{\dd\tilde\bbP_{-\delta,k}}
	\bigg)^{\frac{1}{1-\zeta}} \bigg]^{1-\zeta} &\,=\,&
	\bbE \bigg[ \bigg( \frac{\dd\bbP}{\dd\tilde\bbP_{-\delta,k}}
	\bigg)^{\frac{\zeta}{1-\zeta}} \bigg]^{1-\zeta} \,=\,
	\bbE \Bigg[
	\prod_{i=1}^k e^{\frac{\zeta}{1-\zeta} \delta \omega_i +
	\frac{\zeta}{1-\zeta}\gL(-\gd)} \Bigg]^{1-\zeta}\nonumber\\
	&\,=\,&
	 e^{ (1-\zeta)k \left[ \gL(\frac{\zeta}{1-\zeta}\gd)+\frac{\zeta}{1-\zeta}\gL(-\gd) \right] } \,.
\end{eqnarray}
Using \eqref{eq:Zpinc} and recalling
\eqref{eq:hannpin}, the first factor in \eqref{eq:trick} can also be computed:
\begin{equation}\label{eq:tildeZ}
	\tilde\bbE_{-\delta,k} \big[ Z_{k,\beta,h}^{\rc,\omega} \big]
	= \E\Big[ e^{(\gL(\beta-\delta)-\gL(\gb)-\gL(-\gd)+h) \, |\tau \cap [1,k]|}
	\,1_{\{k \in \tau\}} \Big] \,.
\end{equation}
Since $\tau_{|\tau \cap [1,k]|} \le k \le
\tau_{|\tau \cap [1,k]| + 1}$, it follows by the strong law of large numbers that
\begin{equation} \label{eq:lkj}
	\frac{|\tau \cap [1,k]|}{k} \,\xrightarrow[\ k \to \infty\ ]\, \frac{1}{\mu} \qquad 
	\text{$\P$-a.s.}\,.
\end{equation}
Since $\P(k \in \tau) \to \frac{1}{\mu} > 0$ as $k\to\infty$, by the renewal theorem,
we have the convergence in distribution
\begin{equation} \label{eq:weakco}
	\frac{|\tau \cap [1,k]|}{k} \,\xrightarrow[\ k \to \infty\ ]{\rm d}\,
	\frac{1}{\mu} \qquad 
	\text{under } \P(\,\cdot\,|\,k \in \tau)\,.
\end{equation}

We now parametrize everything in terms of $\beta$. Let us set
\begin{equation*}
	h_\beta = c \beta^2 \,, \quad \delta_\beta = a \beta \,, \quad
	k_\beta = \frac{t}{\beta^2} \,, \qquad \text{for } c,a,t \in (0,\infty) \text{ with }
	a > c \,.
\end{equation*}
As $\beta \downarrow 0$, we have $k_\beta \to \infty$ and,
recalling \eqref{eq:asLambda},
$$
( \gL(\beta-\delta_\gb)-\gL(\gb)-\gL(-\gd_\gb)+h_\gb) \frac{k_\beta}{\mu}
\sim (h_\beta - \beta\delta_\beta) \frac{k_\beta}{\mu}
\;\longrightarrow\; (c-a)\frac{t}{\mu} \;\in\; (-\infty, 0) \,.
$$
Together with the fact that $\P(k \in \tau) \to \frac{1}{\mu}$ as $k\to\infty$, 
it follows from \eqref{eq:tildeZ} and \eqref{eq:weakco} that
\begin{equation} \label{eq:csdf}
	\lim_{\beta \downarrow 0} \, \tilde\bbE_{-\delta_\beta, k_\beta}
	\big[ Z_{k_\beta,\, \beta,\, h_\beta}^{\rc,\omega} \big]
	\,=\, \frac{1}{\mu} \, e^{(c-a) \frac{t}{\mu} } \,,
\end{equation}
hence, by \eqref{eq:trick} and \eqref{eq:entrp},
\begin{equation} \label{eq:optquas}
	\limsup_{\beta\downarrow 0}
	\bbE\big[ \big( Z_{k_\beta,\beta,h_\beta}^{\rc,\omega} \big)^\zeta \big]
	\,\le\, \frac{1}{\mu^\zeta} \,
	e^{\frac{\zeta}{\mu} (c-a) t } \, e^{\frac{a^2}{2} \frac{\zeta}{1-\zeta} t}
	\qquad \forall\, a > c \,.
\end{equation}
Note that the exponent is a polynomial of second degree in $a$:
optimizing over $a$ yields
\begin{equation}\label{eq:aopt}
	a = \frac{1-\zeta}{\mu} \,,
\end{equation}
and one gets the basic estimate
\begin{equation}\label{eq:fracest}
	\limsup_{\beta\downarrow 0}
	\bbE\big[ \big( Z_{k_\beta,\beta,h_\beta}^{\rc,\omega} \big)^\zeta \big]
	\,\le\, \frac{1}{\mu^\zeta}\,
	\exp\bigg\{ \frac{\zeta}{\mu} \bigg( c-\frac{1-\zeta}{2\mu} \bigg) t \bigg\}.
\end{equation}
To feed this estimate into the coarse graining scheme and obtain \eqref{eq:ttopprove1}, 
we need to make the right hand side arbitrarily small. 
This can be accomplished by choosing $t$ large enough,
\emph{provided $c< (1-\zeta)/(2\mu)$}, or equivalently, 
$h/ \beta^2<(1-\zeta)/(2\mu)$. As we will see later, the coarse graining scheme works only 
if $\zeta>1/(1+\alpha)$, which leads to
$h/\beta^2 < \alpha/(2(1+\alpha)\mu)$ and thus to the sharp lower bound
\eqref{eq:CSZ+} on $h_c(\beta)$.

\smallskip

Note that the bound \eqref{eq:fracest} is derived via a \emph{subtle balance between 
the cost of changing
the measure and the annealed partition function under the changed measure}, i.e., the two
factors in the right hand side \eqref{eq:trick}. 
This is in contrast to \cite[Proposition~7.1]{cf:G2}, where:
\begin{itemize}
\item the cost of changing
the measure is only required to be an arbitrary fixed
constant for each coarse graining block;
\item the annealed partition function under the changed measure is small over any interval whose
length exceeds a $\delta$-proportion of the coarse graining block. 
\end{itemize}
In our case, the cost of changing the measure needs to be estimated sharply;
furthermore, in order to balance this cost and get \eqref{eq:fracest}, 
\emph{we need to average the partition function under the
changed measure over an interval whose length is close to the full coarse graining block}.
Fortunately such configurations can be shown to give the dominant contribution in the case $\alpha>1$.
We stress that this is \emph{not} the case when $\alpha < 1$.

\subsection{Proof of the lower bound}\label{S:coarsegraining}
We divide the proof into several steps.

\medskip

\noindent
{\bf STEP 1.} Let us first set up the proper framework. To prove the lower bound for $h_c(\beta)$ in \eqref{eq:CSZ+}, we show that
for every $\epsilon > 0$ small enough there exists $\beta_0 = \beta_0(\epsilon) \in (0,\infty)$
such that for every $\beta \in (0, \beta_0)$ we have
\begin{equation} \label{eq:fceps}
	\f\big( \beta, \, { c}_\epsilon\, \beta^2
	\big) \,=\, 0 \,, \qquad \text{where} \qquad
	{ c}_\epsilon \,:=\,
	(1-\epsilon) \frac{\alpha}{1+\alpha} \frac{1}{2 \mu} \,.
\end{equation}
As explained in Section~\ref{S:warmup}, it suffices to show that there exists
$\zeta = \zeta_\epsilon \in (0,1)$ such that
\begin{equation} \label{eq:ttopprove}
	\liminf_{N\to\infty} \bbE\big[ \big( Z_{N,\beta, c_\epsilon \beta^2}^{\rc, \omega} 
	\big)^\zeta \big] < \infty.
\end{equation}


\smallskip

Henceforth let $\epsilon \in (0,1)$ be fixed. We then set
\begin{equation} \label{eq:zetaeps}
	\zeta_\epsilon \,:=\, \frac{1}{1+\alpha} \,+\, \frac{\epsilon}{2}\frac{\alpha}{1+\alpha}
	\,\in\, \big( \tfrac{1}{1+\alpha}, 1 \big) \,.
\end{equation}
For later convenience,
we assume that $\epsilon$ is small enough so that
$(1+\alpha - \frac{\epsilon}{2}) \zeta_\epsilon > 1$ and also $\alpha - \epsilon/2 > 1$
(which is possible, since $\alpha > 1$).
We set the coarse graining length scale to be
\begin{equation} \label{eq:kbetaeps}
	k= k_{\beta, \epsilon} \,=\, t_\epsilon/\beta^2
\end{equation}
for some $t_\epsilon \in (0,\infty)$, which \emph{depends only
on $\epsilon$} and will be fixed at the end of the proof.

\smallskip

Recall from \eqref{eq:tildeP} the exponentially tilted law $\tilde\bbP := \tilde\bbP_{-\delta,k}$.
We will use it with $k=k_{\beta,\epsilon}$ and
\begin{equation} \label{eq:deltabetaeps}
	\delta =\delta_{\beta,\epsilon} \,:=\, a_\epsilon \beta
	\,:=\, \frac{1-\zeta_\epsilon}{\mu} \beta,
\end{equation}
where the choice of $a_\epsilon$ is (a posteriori) optimal, recall \eqref{eq:aopt}.
Note that $c_\epsilon \le (1-\zeta_\epsilon) \frac{1}{2\mu}$
by \eqref{eq:fceps} and \eqref{eq:zetaeps},
hence $a_\epsilon \ge 2 c_\epsilon$ by \eqref{eq:deltabetaeps}. In particular, we stress that
\begin{equation}\label{eq:a>c}
	a_\epsilon > c_\epsilon \,,
\end{equation}
a relation that will be used several times in the sequel.

Now we recall the crucial relation \eqref{eq:csdf}, which can be rewritten in our current setting as
\begin{equation} \label{eq:csdf+}
	\lim_{\beta \downarrow 0} \tilde\bbE
	\big[ Z_{t_\epsilon \beta^{-2},\, \beta,\, c_\epsilon \beta^2}^{\rc,\omega} \big]
	\,=\, \frac{1}{\mu} \, e^{-\frac{1}{\mu} (a_\epsilon - c_\epsilon) t_\epsilon } \,.
\end{equation}
The convergence \eqref{eq:csdf+} is actually \emph{uniform}
when $t_\epsilon$ varies in a compact subset of $(0,\infty)$. In 
particular, there exists $\beta_1(\epsilon) > 0$ such that
for all $\beta \in (0, \beta_1(\epsilon))$ and all $n\in\N$ 
with $(1-\epsilon/5) t_\epsilon \beta^{-2} \leq n\leq t_\epsilon\beta^{-2}$, we have
\begin{equation} \label{eq:intesti}
\frac{1-\epsilon}{\mu} \, e^{-\frac{1}{\mu} (a_\epsilon - c_\epsilon) \beta^2 n}
	\,\le\, \tilde\bbE \big[ Z_{n,\, \beta,\, c_\epsilon \beta^2}^{\rc,\omega} \big]
	\,\le\, \frac{1+\epsilon}{\mu} \, e^{-\frac{1}{\mu} (a_\epsilon - c_\epsilon) \beta^2 n}.
\end{equation}
This uniform bound follows from the convergence in \eqref{eq:weakco},
because the functions $x \mapsto e^{-C x}$ are uniformly
bounded and uniformly Lipschitz, if $C$ ranges over a bounded set.
Note that the upper and lower bounds in \eqref{eq:intesti}
are bounded away from $0$ and $\infty$ (for a fixed $\epsilon > 0$), 
because $(1-\tfrac{\epsilon}{5}) t_\epsilon \le \beta^2 n \le t_\epsilon$.

\medskip

\noindent
{\bf STEP 2.} We now develop the coarse graining scheme. The system size $N$ will be a multiple of 
the coarse graining length scale: $N = m k=m t_\epsilon \beta^{-2}$ for some $m\in\N$. We 
then partition $\{1,\ldots, N\}$ into $m$ blocks $B_1, \ldots, B_m$ of size 
$k=t_\epsilon \beta^{-2}$, defined by
\begin{equation*}
	B_i \,:=\, \big\{(i-1)k + 1, \ldots, i k\big\} \,\subseteq\, \{1,\ldots,N\} \,,
\end{equation*}
so that the macroscopic (coarse-grained) ``configuration space'' is $\{1,\ldots,m\}$.
A macroscopic configuration is a subset $J \subseteq \{1,\ldots,m\}$. By a decomposition 
according to which blocks are visited by the renewal process (we call these blocks occupied), 
we can then write
\begin{equation*}
	Z_{N,\beta, c_\epsilon \beta^2}^{\rc, \omega} \,=\, \sum_{J \subseteq \{1,\ldots, m\}:
	\ m \in J} \hat Z_J
\end{equation*}
where for $J = \{j_1, \ldots, j_\ell\}$, with $1 \leq  j_1 < j_{2} < \ldots < j_\ell = m$ 
and $\ell = |J|$,
\begin{equation}\label{ZJpin}
\begin{split}
	\hat Z_J  \,:=\, \sumtwo{d_1, f_1 \in B_{j_1}}{d_1 \le f_1}
	\ldots \sumtwo{d_{\ell-1}, f_{\ell-1} \in B_{j_{\ell-1}}}{d_{\ell - 1} \le f_{\ell - 1}}
	\sum_{d_\ell \in B_{j_\ell} = B_m}
	\Bigg( \prod_{i=1}^\ell K(d_i - f_{i-1}) z_{d_i} Z_{d_i, f_i} \Bigg) \,,
\end{split}
\end{equation}
where we set $f_0 := 0$ and $f_\ell := N = m k$, and for all $d<f\in\N$,
\begin{equation}\label{zdf}
	z_d \,:=\, e^{\beta \omega_d - \Lambda(\gb) + c_\epsilon \beta^2} \,,
	\qquad
	Z_{d,f} \,:=\, Z_{f-d,\beta,c_\epsilon \beta^2}^{\rc, \theta^d\omega} \,,
\end{equation}
with $\theta^d\omega:= \{(\theta^d \omega)_n\}_{n\in\N} = \{\omega_{n+d}\}_{n\in\N}$ defined as a 
shift of the disorder $\omega$. Since $\zeta_{\epsilon}<1$, one has that
$(a+b)^{\zeta_\epsilon} \le a^{\zeta_\epsilon} + b^{\zeta_\epsilon}$, for all $a,b\ge 0$, and 
consequently
\begin{equation} \label{eq:fracmom}
	\bbE \big[ \big( Z_{N,\beta, c_\epsilon \beta^2}^{\rc, \omega} \big)^{\zeta_\epsilon} \big]
	\,\le\, \sum_{J \subseteq \{1,\ldots, m\}:
	\ m \in J} \bbE \big[ \big( \hat Z_J \big)^{\zeta_\epsilon} \big].
\end{equation}

To bound $\bbE \big[ \big( \hat Z_J \big)^{\zeta_\epsilon} \big]$, we apply the change of measure as in \eqref{eq:tildeP}. Let $\tilde\bbP_{J}$ be the law of the disorder obtained from $\bbP$, where independently for each $n \in \bigcup_{i \in J} B_i$, the law of $\omega_n$ is tilted with density $e^{-\delta \omega_n-\Lambda(-\delta)}$, with $\delta=a_\epsilon\beta$ as chosen in \eqref{eq:deltabetaeps}. Then by the same argument as in \eqref{eq:trick}, we have
\begin{equation} \label{eq:Holder}
	\bbE \big[ \big( \hat Z_J \big)^{\zeta_\epsilon} \big] \,\le\,
	\tilde\bbE_J \big[ \hat Z_J \big]^{\zeta_\epsilon} \,
	\tilde\bbE_{J} \bigg[ \bigg( \frac{\dd\bbP}{\dd\tilde\bbP_{J}}
	\bigg)^{\frac{1}{1-{\zeta_\epsilon}}} \bigg]^{1-{\zeta_\epsilon}} \,.
\end{equation}
To bound the second factor, note that for $J = \{j_1, \ldots, j_\ell\}$, the same calculation as in \eqref{eq:entrp} gives (recall that $k=t_\epsilon \beta^{-2}$ and $\delta=a_\epsilon \beta$)
\begin{align}
	\nonumber
	\tilde\bbE_{J} \bigg[ \bigg( \frac{\dd\bbP}{\dd\tilde\bbP_{J}}
	\bigg)^{\frac{1}{1-{\zeta_\epsilon}}} \bigg]^{1-{\zeta_\epsilon}} & \,=\,
	\bbE \bigg[ \bigg( \frac{\dd\bbP}{\dd\tilde\bbP_{J}}
	\bigg)^{\frac{{\zeta_\epsilon}}{1-{\zeta_\epsilon}}} \bigg]^{1-{\zeta_\epsilon}} \,=\,
	\bbE \Bigg[ \prod_{i=1}^\ell \prod_{n \in B_{j_i}}
	e^{\frac{{\zeta_\epsilon}}{1-{\zeta_\epsilon}} \delta \omega_n +
	\frac{{\zeta_\epsilon}}{1-{\zeta_\epsilon}}
	\Lambda(-\gd)} \Bigg]^{1-{\zeta_\epsilon}} \\
	\label{eq:RN}
	& \,=\, e^{  (1-\zeta_\epsilon) 
	\left[ \gL(\frac{\zeta_\epsilon}{1-\zeta_\epsilon}\gd)
	+\frac{\zeta_\epsilon}{1-\zeta_\epsilon}\gL(-\gd) \right]
	 k \ell}
	 \,<\, e^{(1+\frac{\epsilon}{20})\frac{a_{\epsilon}^2}{2}
	\frac{{\zeta_\epsilon}}{1-{\zeta_\epsilon}} t_{\epsilon}|J|}\,,
\end{align}
where the last inequality holds, by \eqref{eq:asLambda}, for $\beta$ small enough,
say $\beta \in (0, \beta_2(\epsilon))$, for some $\beta_2(\epsilon) > 0$.
To bound the first factor in \eqref{eq:Holder},
recall that $a_\epsilon > c_\epsilon$ and note that, for every $d \in \bigcup_{i\in J} B_i$, 
for $\beta < \beta_2(\epsilon)$ we have
\begin{equation}\label{zn}
	\tilde\bbE_J[z_d] \,=\, e^{\gL(\beta-\delta)-\gL(\gb)-\gL(-\gd)+c_{\epsilon}\gb^2}
	\,=\, e^{-(a_\epsilon - c_\epsilon) \beta^2+o(\beta^2)} \,<\, 1 \,,
\end{equation}
provided $\beta_2(\epsilon)$ is chosen small enough. 
Furthermore, for $d, f \in B_i$ for any $i \in J$,
\begin{equation} \label{eq:tildeu}
	\tilde\bbE_J[ Z_{d,f}] \,=\, \tilde u(f-d) \,, \qquad \text{where} \qquad
	\tilde u(n) \,:=\, \tilde\bbE \big[
	Z_{n,\, \beta,\, c_\epsilon \beta^2}^{\rc, \omega} \big] \,.
\end{equation}
Therefore from \eqref{ZJpin}, we obtain
\begin{equation} \label{eq:towork}
	\tilde\bbE_J \big[ \hat Z_J \big] \,\le\,
	\sumtwo{d_1, f_1 \in B_{j_1}}{d_1 \le f_1}
	\ldots \sumtwo{d_{\ell-1}, f_{\ell-1} \in B_{j_{\ell-1}}}{d_{\ell - 1} \le f_{\ell - 1}}
	\sum_{d_\ell \in B_{j_\ell} = B_m}
	\Bigg( \prod_{i=1}^\ell K(d_i - f_{i-1}) \tilde u(f_i - d_i) \Bigg) \,.
\end{equation}
This expression is nice because it would
be the probability of a renewal event, if $\tilde u$ were replaced by
\begin{equation*}
	u(n) \,:=\, \P(n \in \tau).
\end{equation*}
In fact, we will replace $\tilde u(\cdot)$ by a {\it small} multiple of $u(\cdot)$.
First we need some estimates.
Since $u(n)\to 1/\mu$ as $n\to\infty$ by the renewal theorem, 
we can choose $C_1 \in (0,\infty)$ such that
\begin{equation} \label{eq:univu}
\frac{1}{C_1 \mu} \,\le\, u(n) \,\le\, \frac{C_1}{\mu} \qquad \forall\, n \in\N\,.
\end{equation}
Furthermore, by relation \eqref{eq:ass}
and the fact that slowly varying functions are asymptotically dominated by any
polynomial, it follows that there exists $C_{2,\epsilon} \in (0,\infty)$ be such that
\begin{equation} \label{eq:univK}
	K(n) \,\le\, \frac{C_{2,\epsilon}}{n^{1+\alpha-\frac{\epsilon}{2}}} \,, \qquad
	\forall n \in \N\,.
\end{equation}
Finally, since $0 \le |\tau \cap [1,n]| \le k$
for every $n \in \{1, \ldots, k=t_\epsilon\beta^{-2}\}$,
recalling \eqref{eq:tildeZ} we obtain
\begin{equation} \label{eq:trivb}
\begin{split}
	e^{-2(a_\epsilon - c_\epsilon) t_\epsilon} u(n)
	\,\le\, \tilde u(n) \,=\, \tilde\bbE \big[ Z_{n,\, \beta,\, c_\epsilon \beta^2}^{\rc,\omega} \big]
	& \,=\, \E\Big[ e^{\big(-(a_\epsilon - c_\epsilon) \beta^2+o(\gb^2)\big) \, |\tau \cap [1,n]|}
	\,\ind_{\{n \in \tau\}} \Big]\\
	& \,\le\, u(n) \,\leq\, 1 \,,
\end{split}
\end{equation}
for $\beta \in (0, \beta_2(\epsilon))$, because $a_\epsilon > c_\epsilon$ (recall \eqref{zn}).

\medskip

\noindent
{\bf STEP 3.} We now replace $\tilde u(\cdot)$ in \eqref{eq:towork}
by a suitable small multiple of $u(\cdot)$. However, this is only possible
for occupied blocks that are surrounded by occupied blocks. The blocks with
unoccupied neighboring blocks have to be dealt with in a different way.

Let us be more precise. Fix $i$ such that both $j_i \in J$ and $j_i-1 \in J\cup\{0\}$.
Then we claim that the terms in \eqref{eq:towork} with $|d_i - f_{i-1}| \le \frac{\epsilon}{10} k$
give the main contribution. Indeed, setting
$\overline f_{i-1} = (i-1)k$,
$\overline d_{i} = (i-1)k + 1$,
\begin{equation}
\label{eq:comparShort}
	\tilde u(\overline f_{i-1} - \overline d_{i-1}) \,
	K(\overline d_i - \overline f_{i-1}) \, \tilde u(\overline f_i - \overline d_i)
	\,\ge\, \big( e^{-2(a_\epsilon - c_\epsilon) t_\epsilon} \frac{1}{C_1\mu}\big)^2 \, K(1) \,,
\end{equation}
where we used the lower bounds in \eqref{eq:trivb} and \eqref{eq:univu}. 
Using instead the upper bound in \eqref{eq:trivb} that $\tilde u(\cdot) \leq 1$,
together with \eqref{eq:univK}, yields
\begin{equation}
\label{eq:comparLong}
	\sumtwo{f_{i-1} \in B_{j_{i-1}}, \ d_i \in B_{j_i}}{|d_i - f_{i-1}| >
	\frac{\epsilon}{10} k}
	\tilde u(f_{i-1} - d_{i-1}) \, K(d_i - f_{i-1}) \, \tilde u(f_i - d_i)
	\,\le\, \frac{k^2 \, C_{2,\epsilon}}{(\frac{\epsilon}{10} k)^{1+\alpha-\frac{\epsilon}{2}}}
	\,=\, \frac{10^{1+\alpha} \, C_{2,\epsilon}}{\epsilon^{1+\alpha} \,
	k^{\alpha - \frac{\epsilon}{2} -1}} \,.
\end{equation}
Recall that $\epsilon$ is chosen small enough, so that
$\alpha-\frac{\epsilon}{2} > 1$,
and $k = t_{\epsilon}\beta^{-2} \to \infty$ as $\beta \downarrow 0$.
Therefore, we can find $\beta_3(\epsilon) \in (0,\infty)$
such that for every $\beta \in (0, \beta_3(\epsilon))$, the contribution of the 
terms in \eqref{eq:towork} with $|d_i - f_{i-1}| > \frac{\epsilon}{10} k$ is 
smaller than the contribution of the terms with $|d_i - f_{i-1}| \le \frac{\epsilon}{10} k$ 
(comparing \eqref{eq:comparShort} and \eqref{eq:comparLong}).

To summarize: when $\beta< \beta_3(\epsilon)$, the right hand side of \eqref{eq:towork} 
can be bounded from above by restricting the sum
to $|d_i - f_{i-1}| \le \frac{\epsilon}{10} k$ for every $i$ such that both $j_i \in J$ 
and $j_i-1 \in J$, provided one introduces a multiplicative factor of $2$ for each such $i$. 

Let us now set
\begin{equation*}
	\overset{\circ}{J} \,:=\, \big\{ j\in J:\ j - 1 \in J\cup\{0\} \text{ and } 
	j+1 \in J\cup\{m+1\} \big\} \,.
\end{equation*}
If $j \in \overset{\circ}{J}$, say $j = j_i$ for some $1\leq i\leq |J|=l$, then we have 
restricted the summation in \eqref{eq:towork} to both $|d_i - f_{i-1}| \le \frac{\epsilon}{10} k$ 
and $|d_{i+1} - f_{i}| \le \frac{\epsilon}{10} k$, which yields
$f_i - d_i > (1-\frac{\epsilon}{5}) k$. Recalling \eqref{eq:tildeu}, \eqref{eq:intesti} 
and \eqref{eq:univu}, we can then bound
\begin{equation} \label{eq:Deps}
	\tilde u(f_i - d_i) \,\le\, D_\epsilon u(f_i - d_i) \,, \qquad
	\text{where} \qquad
	D_\epsilon \,:=\, C_1 (1+\epsilon)\, e^{-\frac{1}{\mu} (a_\epsilon - c_\epsilon)
	(1-\tfrac{\epsilon}{5}) t_\epsilon} \,.
\end{equation}
This is the crucial replacement, after which we can remove the 
restrictions $|d_i - f_{i-1}| \le \frac{\epsilon}{10} k$
in \eqref{eq:towork} to get an upper bound. 
(Note that $D_\epsilon$
can be made arbitrarily small by choosing $t_\epsilon$ large, because $a_\epsilon>c_\epsilon$,
so we may assume henceforth that $D_\epsilon < 1$.)

It only remains to deal with the terms $j \in J \setminus \overset{\circ}{J}$,
i.e. the occupied blocks that have at least one neighboring block which is unoccupied.
For these blocks, we replace $\tilde u(f_i - d_i)$ by $u(f_i - d_i)$, 
thanks to \eqref{eq:trivb}. 
Gathering the above considerations, we can upgrade \eqref{eq:towork} to
\begin{equation} \label{eq:towork+}
	\tilde\bbE_J \big[ \hat Z_J \big] \le 2^{|J|} 
	(D_\epsilon)^{|\overset{\circ}{J}|} \!\!\!\!\sumtwo{d_1, f_1 \in B_{j_1}}{d_1 \le f_1}
	\!\!\cdots\!\!\!\!\!\! 
	\sumtwo{d_{\ell-1}, f_{\ell-1} \in B_{j_{\ell-1}}}{d_{\ell - 1} \le f_{\ell - 1}}
	\sum_{d_\ell \in B_{j_\ell} = B_m}
	\Bigg( \prod_{i=1}^\ell K(d_i - f_{i-1}) u(f_i - d_i) \Bigg) \,,
\end{equation}
where we note that the summation is now the probability of a renewal event. 

\medskip

\noindent
{\bf STEP 4.}  We now deal with the gaps between occupied blocks. 
Let $i \in \{1,\ldots, \ell\}$ be such that
$j_i \in J$ but $j_i - 1 \not\in J\cup\{0\}$, that is $j_i - j_{i-1} \ge 2$. Since
$d_i \in B_{j_i}$ and $f_{i-1} \in B_{j_{i-1}}$, we have
$d_i - f_{i-1} \ge (j_i - j_{i-1} - 1) k$.
Then it follows from \eqref{eq:univK} that
\begin{equation} \label{eq:boundKaa}
	K(d_i - f_{i-1}) \,\le\, \frac{C_{2,\epsilon}}{k^{1+\alpha-\frac{\epsilon}{2}}} \,
	\frac{1}{(j_i - j_{i-1} - 1)^{1+\alpha-\frac{\epsilon}{2}}}
	\,\le\, \frac{2^{1+\alpha} \, C_{2,\epsilon}}{k^{1+\alpha-\frac{\epsilon}{2}}} \,
	\frac{1}{(j_i - j_{i-1})^{1+\alpha-\frac{\epsilon}{2}}} \,,
\end{equation}
where the last inequality holds because $n-1 \ge \frac{n}{2}$ for $n\ge 2$.
Furthermore, by \eqref{eq:univu},
\begin{equation} \label{eq:bounduaa}
	u(f_{i-1} - d_{i-1}) \,\le\, C_1^2 \, u(\overline{f}_{i-1} - d_{i-1}) \,, \qquad
	u(f_i - d_i) \,\le\, C_1^2 \, u(f_i - \overline{d}_i) \,,
\end{equation}
where we recall that $\overline f_{i-1} = (i-1)k$ and
$\overline d_{i} = (i-1)k + 1$ denote respectively the last point of the block
$B_{i-1}$ and the first point of the block $B_i$.

We can now insert the bounds \eqref{eq:boundKaa}, \eqref{eq:bounduaa} 
into \eqref{eq:towork+}, starting with the smallest
$i \in \{1, \ldots, \ell\}$ such that $j_i - j_{i-1} \ge 2$ (if any),
and then proceeding in increasing order.
(When there are two consecutive gaps, that is, when both $j_i - j_{i-1} \ge 2$ 
and $j_{i-1} - j_{i-2} \ge 2$, the first bound in \eqref{eq:bounduaa}
becomes $u(f_{i-1} - \overline{d}_{i-1}) \,\le\, C_1^2 \, 
u(\overline{f}_{i-1} - \overline{d}_{i-1})$,
because we have already replaced $d_{i-1}$ by $\overline{d}_{i-1}$ in the previous step.)
In this way, we eliminate all the terms in \eqref{eq:towork+} 
that depend on $f_{i-1}$ and $d_i$, and the double sum over $f_{i-1}$ and $d_i$ can be removed 
by introducing a multiplicative factor $k^2$.

Having eliminated the gaps, we are left with ``clusters of consecutive occupied blocks'':
more precisely, the surviving sums in \eqref{eq:towork+} are those
over variables $f_{j-1}, d_j$ with $a < j \le b$,
for every maximal interval $\{a, \ldots, b\} \subseteq \{1, \ldots, \ell\}$. These
sums can be factorized and each such interval gives a contribution
equal to the probability (hence bounded by $1$) that the renewal process visits a 
cluster of consecutive occupied blocks. Therefore
\begin{equation} \label{eq:uff1}
\begin{split}
	\tilde\bbE_J \big[ & \hat Z_J \big] \,\le\, 2^{|J|} \,
	(D_\epsilon)^{|\overset{\circ}{J}|} \,
	\prod_{i \in \{1,\ldots,\ell\}: \ j_i - j_{i-1} \ge 2}
	C_1^4 \, k^2 \, \frac{2^{1+\alpha} \, C_{2,\epsilon}}{k^{1+\alpha-\frac{\epsilon}{2}}} \,
	\frac{1}{(j_i - j_{i-1})^{1+\alpha-\frac{\epsilon}{2}}} \\
	& \,=\, 2^{|J|} \,
	(D_\epsilon)^{|\overset{\circ}{J}|} \,
	\bigg( \frac{2^{1+\alpha} \, C_1^4 \, C_{2,\epsilon}}{k^{\alpha-\frac{\epsilon}{2}-1}}
	\bigg)^{|\{ i \in \{1,\ldots,\ell\}: \ j_i - j_{i-1} \ge 2 \}|}
	\prod_{i \in \{1,\ldots,\ell\}}
	\frac{1}{(j_i - j_{i-1})^{1+\alpha - \frac{\epsilon}{2}}} \,.
\end{split}
\end{equation}

Next observe that
\begin{equation} \label{eq:uff2}
	|J \setminus \overset{\circ}{J}| \,=\,
	|J| - |\overset{\circ}{J}| \,\le\, 2 \,
	|\{ i \in \{1,\ldots,\ell\}: \ j_i - j_{i-1} \ge 2 \}| \,,
\end{equation}
since each point in $J\setminus \overset{\circ}{J}$ is either the starting point or ending point 
of a gap,
i.e., a pair $\{j_{i-1}, j_i\}$ with $j_i-j_{i-1}\geq 2$. 
Since $\alpha-\frac{\epsilon}{2}>1$, by our choice of $\epsilon$, 
and $k=t_{\epsilon} \beta^{-2} \to \infty$
as $\beta \downarrow 0$, there exists $\beta_4(\epsilon) \in (0,\infty)$ such that for 
every $\beta\in (0, \beta_4(\epsilon))$, we have
\begin{equation*}
	\frac{2^{1+\alpha} \, C_1^4 \, C_{2,\epsilon}}{k^{\alpha-\frac{\epsilon}{2}-1}}
	\,\le\, (D_\epsilon)^2 \,.
\end{equation*}
Since $D_\epsilon < 1$, it follows from \eqref{eq:uff1} and \eqref{eq:uff2} that
\begin{equation} \label{eq:towork++}
	\tilde\bbE_J \big[ \hat Z_J \big] \,\le\, (2 D_\epsilon)^{|J|} \,
	\prod_{i \in \{1,\ldots,\ell\}}
	\frac{1}{(j_i - j_{i-1})^{1+\alpha - \frac{\epsilon}{2}}} \,.
\end{equation}

\medskip

\noindent
{\bf STEP 5.} We now conclude the proof. Looking back at \eqref{eq:fracmom}, \eqref{eq:Holder},
\eqref{eq:RN} and \eqref{eq:towork++}, we can write
\begin{equation} \label{eq:pin}
	\bbE \big[ \big( Z_{N,\beta, c_\epsilon \beta^2}^{\rc, \omega} \big)^{\zeta_\epsilon} \big]
	\,\le\, \sum_{J \subseteq \{1,\ldots, m\}: \ m \in J}
	\Bigg( \prod_{i \in \{1,\ldots,\ell\}}
	\frac{G_\epsilon}{(j_i - j_{i-1})^{(1+\alpha-\frac{\epsilon}{2}) \zeta_\epsilon}} \Bigg) \,,
\end{equation}
where, recalling the definition \eqref{eq:Deps} of $D_\epsilon$, we have set
\begin{equation*}
	G_\epsilon \,:=\,
	(2 D_\epsilon )^{\zeta_\epsilon}
	\, e^{(1+\frac{\epsilon}{20})\frac{a_{\epsilon}^2}{2}
	\frac{\zeta_\epsilon}{1-{\zeta_\epsilon}} t_{\epsilon} }
	\,=\, \big( 2 (1+\epsilon) C_1 \big)^{\zeta_\epsilon}
	\, e^{\frac{\zeta_\epsilon}{\mu} (a_\epsilon - c_\epsilon)
	\tfrac{\epsilon}{5} t_\epsilon}
	\, e^{\big\{ \frac{\zeta_\epsilon}{\mu} (c_\epsilon - a_\epsilon) +
	(1+\frac{\epsilon}{20})\frac{a_{\epsilon}^2}{2}
	\frac{\zeta_\epsilon}{1-{\zeta_\epsilon}} \big\} t_{\epsilon} } \,.
\end{equation*}
Let us now replace the value of $a_\epsilon = \frac{1-\zeta_\epsilon}{\mu}$ that we
fixed in \eqref{eq:deltabetaeps} (recall from \eqref{eq:aopt} that this value is optimal 
in minimizing the second exponential, if we neglect the term $\epsilon/20$), getting
\begin{equation*}
	G_\epsilon\,=\,\big( 2 (1+\epsilon) C_1 \big)^{\zeta_\epsilon}
	\, e^{\frac{\zeta_\epsilon}{\mu} (\frac{1-\zeta_\epsilon}{\mu} - c_\epsilon)
	\tfrac{\epsilon}{5} t_\epsilon}
	\, e^{\frac{\zeta_\epsilon}{\mu}
	( c_\epsilon-(1-\frac{\epsilon}{20})\frac{1-\zeta_\epsilon}{2\mu} ) t_{\epsilon} }\,.
\end{equation*}
We now substitute in the value of 
$c_\epsilon = (1-\epsilon) \frac{\alpha}{1+\alpha} \frac{1}{2 \mu}$ set in \eqref{eq:fceps},
and substitute inside the parentheses in the exponential the value of 
$\zeta_\epsilon = \frac{1}{1+\alpha} + \frac{\epsilon}{2}\frac{\alpha}{1+\alpha}$ 
set in \eqref{eq:zetaeps}, which gives
\begin{equation*}
\begin{split}
	G_\epsilon &\,<\,\big( 2 (1+\epsilon) C_1 \big)^{\zeta_\epsilon}
	\, e^{\frac{\zeta_\epsilon}{\mu}
	(\frac{\alpha}{1+\alpha} \frac{1}{2 \mu})
	\tfrac{\epsilon}{5} t_\epsilon}
	\, e^{-\frac{\zeta_\epsilon}{\mu}
	\frac{9\epsilon}{20} \frac{\alpha}{1+\alpha} \frac{1}{2 \mu} t_{\epsilon} }
	\,=\, \big( 2 (1+\epsilon) C_1 \big)^{\zeta_\epsilon}
	\, e^{-\frac{\zeta_\epsilon}{\mu}
	\frac{\alpha}{1+\alpha} \frac{1}{2 \mu}
	\frac{\epsilon}{4} t_{\epsilon} }\,.
\end{split}
\end{equation*}

We are ready for the final step: by the definition \eqref{eq:zetaeps}
of $\zeta_\epsilon$, and the fact that $\epsilon$ has been fixed small enough,
we have $(1+\alpha-\frac{\epsilon}{2})\zeta_\epsilon > 1$.
Since the upper bound for $G_\epsilon$ vanishes as $t_\epsilon \to +\infty$, we can fix 
$t_\epsilon \in (0,\infty)$ large enough, depending only on $\epsilon$, such that
\begin{equation*}
	\sum_{n = 1}^\infty 
	\frac{G_\epsilon}{n^{(1+\alpha-\frac{\epsilon}{2})\zeta_\epsilon}}
	< 1\,.
\end{equation*}
The right hand side of \eqref{eq:pin} is then smaller than one,
because it can be recognized as the probability of visiting
$m$ for a renewal process, with return distribution given by
$\tilde K(n) := G_\epsilon/n^{(1+\alpha-\frac{\epsilon}{2})\zeta_\epsilon}$
and $\tilde K(\infty)=1-\sum_{n\in\N} \tilde K(n)>0$.
In conclusion, we have shown that for any $\epsilon>0$ small enough, we can find 
$\beta_0(\epsilon) := \min\{\beta_1(\epsilon), \beta_2(\epsilon), \beta_3(\epsilon),
\beta_4(\epsilon)\} \in (0,\infty)$ and $t_\epsilon \in (0,\infty)$, such that 
for all $\beta\in (0, \beta_0(\epsilon))$
\begin{equation*}
	\bbE \big[ \big( Z_{mt_\epsilon\beta^{-2},\beta, c_\epsilon \beta^2}^{\rc, \omega} 
	\big)^{\zeta_\epsilon} \big]
	\,\le\, 1 \qquad
	\forall \, m \in \N \,,
\end{equation*}
where $c_\epsilon$ was defined in \eqref{eq:fceps}. This establishes \eqref{eq:ttopprove} 
and concludes the proof.

\section{On the Pinning Model: Upper Bound}
\label{sec3}

\subsection{A lower bound on the free energy}
The strategy of the proof has been outlined in Section~\ref{S:outline}. First we prove 
the lower bound on the free energy of the pinning model stated in \eqref{eq:flowerbds}, 
which we restate here as a lemma.

\begin{lemma}
For every $c\in\R$
\begin{equation}\label{eq:gooa}
\liminf_{\beta\downarrow 0} \frac{\f(\beta, c\beta^2)}{\beta^2} 
\geq \frac{1}{\mu}\bigg[c-\frac{1}{2\mu} \bigg].
\end{equation}
\end{lemma}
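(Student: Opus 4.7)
The plan is to refine the naive Jensen bound
\[
\log Z_N^\omega \;\ge\; \E[X^\omega], \qquad X^\omega:=\sum_{n=1}^N (\beta\omega_n-\Lambda(\beta)+h)\ind_{n\in\tau},
\]
by means of a disorder-dependent tilt. Taking $\bbE$, using $\bbE[\omega_n]=0$, the renewal-theoretic estimate $\sum_{n=1}^N u(n)=N/\mu+o(N)$ with $u(n):=\P(n\in\tau)$ (which uses $\mu<\infty$), and $\Lambda(\beta)=\beta^2/2+o(\beta^2)$, one obtains only
\[
\liminf_{\beta\downarrow 0}\frac{\f(\beta,c\beta^2)}{\beta^2}\;\ge\; \frac{c-1/2}{\mu},
\]
which matches \eqref{eq:gooa} only for $\mu=1$ and is strictly weaker by $(\mu-1)/(2\mu^2)$ whenever $\mu>1$. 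To recover this missing term I would invoke the Gibbs variational principle
\[
\log Z_N^\omega \;\ge\; \E^{Q}[X^\omega] - H(Q \mid \P),
\]
valid for any probability measure $Q$ on renewal configurations, applied to the pinning-type tilt
\[
\frac{dQ^{\,\omega}_{\tilde\beta}}{d\P}(\tau) \;:=\; \frac{1}{\tilde Z^{\,\omega}_{\tilde\beta}}\exp\Bigl(\tilde\beta\sum_{n=1}^N\omega_n\ind_{n\in\tau}\Bigr),
\]
where $\tilde\beta>0$ is a parameter to be optimized and $\tilde Z^{\,\omega}_{\tilde\beta}$ is the obvious normalization; this $Q^{\,\omega}_{\tilde\beta}$ biases $\tau$ toward sites of large disorder.

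The next step is to expand both terms in the variational bound to second order in $\tilde\beta$ and then take $\bbE$. Using $\bbE[\omega_n]=0$, $\bbE[\omega_n^2]=1$, and the covariance $\cov_\P(\ind_{n\in\tau},\ind_{m\in\tau}) = u(n\wedge m)\,u(|n-m|)-u(n)u(m)$, one obtains (modulo remainders $o(N\beta^2)$ when $\tilde\beta=O(\beta)$)
\[
\bbE\bigl[\E^{Q^{\,\omega}_{\tilde\beta}}[X^\omega]\bigr]\;\approx\;(h-\Lambda(\beta))\sum_{n=1}^N u(n)+\tilde\beta\,\beta\sum_{n=1}^N u(n)(1-u(n)),
\]
\[
\bbE\bigl[H(Q^{\,\omega}_{\tilde\beta}\mid\P)\bigr]\;\approx\;\tfrac{\tilde\beta^2}{2}\sum_{n=1}^N u(n)(1-u(n)).
\]
The renewal theorem gives $u(n)\to 1/\mu$, hence $N^{-1}\sum_{n=1}^N u(n)(1-u(n))\to (\mu-1)/\mu^2$.

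Substituting into the variational bound gives a quadratic in $\tilde\beta$ of the form $\tilde\beta\,\beta A-\tfrac{\tilde\beta^2}{2}A$ with $A:=\sum_n u(n)(1-u(n))$, maximized at $\tilde\beta=\beta$ with maximum value $\beta^2 A/2$; combining this with $(h-\Lambda(\beta))/\mu=\beta^2(c-\tfrac12)/\mu+o(\beta^2)$ yields
\[
\liminf_{\beta\downarrow 0}\frac{\f(\beta,c\beta^2)}{\beta^2}\;\ge\;\frac{c-1/2}{\mu}+\frac{\mu-1}{2\mu^2}\;=\;\frac{1}{\mu}\Bigl(c-\frac{1}{2\mu}\Bigr),
\]
which is \eqref{eq:gooa}. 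The hard part will be controlling the remainders in the Taylor expansion in $\tilde\beta$ uniformly in $N$: since $\sum_n\omega_n\ind_{n\in\tau}$ is unbounded, one needs the exponential moment assumption \eqref{eq:assdis} on $\omega$ together with renewal-theoretic bounds on $\sum_n u(n)^k$ to show that the $O(\tilde\beta^3)$ terms in both expansions are $o(N\beta^2)$ at $\tilde\beta=\beta$. The sign of $c$ plays no role, since the optimal $\tilde\beta=\beta$ does not depend on $c$.
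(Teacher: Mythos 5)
Your route---the Gibbs variational inequality $\log Z_N^\omega \ge \E^Q[X^\omega] - H(Q\mid\P)$ with the $\omega$-dependent tilt $Q^{\,\omega}_{\tilde\beta}$ of the renewal law---is genuinely different from the paper's, which instead conditions on $\{\tau_q=N\}$ to reduce to a finite-dimensional Taylor expansion in $\beta$ followed by Fatou's lemma and a large-deviation estimate for the intersection renewal $\tau\cap\tilde\tau$. Your covariance formula, the two formal expansions, and the final optimization in $\tilde\beta$ all check out arithmetically: the optimal $\tilde\beta=\beta$ produces exactly the extra $\frac{\mu-1}{2\mu^2}$ needed to upgrade $\frac{c-1/2}{\mu}$ to $\frac{1}{\mu}(c-\frac{1}{2\mu})$.

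However, the step you flag as ``the hard part'' is not a routine technicality: it is the entire content of the lemma and is left unaddressed. Writing $\psi(\tilde\beta):=\log\E\bigl[e^{\tilde\beta\sum_n\omega_n\ind_{n\in\tau}}\bigr]$, what you need is $\bbE[\psi(\beta)] \ge (1-o(1))\,\tfrac{\beta^2}{2}\sum_{n\le N}u(n)(1-u(n))$ uniformly in $N$ as $\beta\downarrow 0$; after taking $N\to\infty$ this is precisely the assertion $\liminf_{\beta\downarrow 0}\f(\beta,\Lambda(\beta))/\beta^2 \ge \frac{\mu-1}{2\mu^2}$, which is the $c=1/2$ case of the very lemma you are trying to prove. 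Equivalently, the relative-entropy term is $H(Q^\omega_\beta\mid\P)=\int_0^\beta s\,\psi''(s)\,\dd s$ with $\psi''(s)=\var^{Q^\omega_s}(Y)$, and you would have to control $\bbE[\psi''(s)]$ uniformly in $N$ for $s\in(0,\beta]$; this is a statement about the susceptibility/specific heat of a quenched tilted pinning measure (which is localized for any $s>0$) and is not accessible by crude bounds---the third-cumulant remainder does not obviously scale as $o(N)$ uniformly in $N$ merely because $\omega$ has exponential moments. The paper's device of conditioning on a fixed number of returns $q$ and then letting $\beta\downarrow 0$ \emph{before} $q\to\infty$ is precisely what makes each Taylor expansion finite-dimensional, so the interchange of limits never needs a uniform-in-$N$ control; the replica overlap $\sum_n u_{N,q}(n)^2$ is then handled by a Cram\'er bound for $\tau\cap\tilde\tau$. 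You should either import a device of this kind, or supply an independent argument bounding the tilted variance, before the proposal can be considered a proof.
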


\begin{proof}
A naive lower bound on the free energy is to apply Jensen's inequality,
interchanging the $\log$ in \eqref{eq:free} with the expectation $\bE$ over the renewal process
that appear in the partition function (recall \eqref{eq:Zpin}). 
However, this only leads to a trivial bound, as the expression
in the exponential in \eqref{eq:Zpin} 
is a linear function of the disorder $\omega$. 
To get a better bound, before applying Jensen
we perform a partial integration over a subset of the renewal points,
obtaining a ``coarse-grained Hamiltonian'' that is no longer linear in $\omega$. This 
has certain analogies with Theorem~5.2 in~\cite{cf:G1}.
The details are as follows.

For $q \in \N$ and $\ell \ge q$, we define $H^{(q)}_{\ell,\omega}$ to be the free energy of 
the constrained model of size $\ell$ conditioned to have exactly $q$ returns:
\begin{equation*}
	H^{(q)}_{\ell,\omega} \,:=\, \log \E\big[ e^{\sum_{n=1}^\ell
	(\beta \omega_n -\Lambda(\beta) + h) \ind_{\{n \in \tau\}}} \big| \tau_q = \ell \big] \,.
\end{equation*}
Let $\tau^{(q)} := \{\tau^{(q)}_n\}_{n\in\N_0}$ with $\tau^{(q)}_n := \tau_{nq}$, 
which is a renewal process that keeps one in every $q$ renewal points in $\tau$. We also set
\begin{equation*}
	\cN^{(q)}_N \,:=\, \max\{n \in \N_0: \ \tau^{(q)}_n \le N\}
	\,=\, |\tau^{(q)} \cap [1,N]| \,.
\end{equation*}
By requiring $N\in \tau^{(q)}$ and taking conditional expectation w.r.t. $\tau^{(q)}$, we obtain
\begin{equation*}
	Z_{N,\omega} \ge \E\bigg[ e^{\sum_{n=1}^N
	(\beta \omega_n -\Lambda(\beta) + h) \ind_{\{n \in \tau\}}} \ind_{\{N\in\tau^{(q)}\}} \bigg]  
	= \E\bigg[ \exp\bigg(\sum_{j=1}^{\cN^{(q)}_N}
	H^{(q)}_{\tau^{(q)}_j - \tau^{(q)}_{j-1}, \theta^{\tau^{(q)}_{j-1}} \omega}\bigg)
	\ind_{\{N\in\tau^{(q)}\}} \bigg] \,,
\end{equation*}
where $\theta^i\omega= \{\omega_{n+i}\}_{n\in\N}$ defines a shift of the disorder $\omega$.

Since $\e[\tau_1^{(q)}] = q \mu$ and $\tau_{\cN^{(q)}_N}^{(q)} \le N \le
\tau_{\cN^{(q)}_N +1}^{(q)}$, by the strong law of large numbers
\begin{equation} \label{eq:asdd}
	\lim_{N\to\infty} \frac{1}{N}
	\sum_{j=1}^{\cN^{(q)}_N} f(\tau^{(q)}_j - \tau^{(q)}_{j-1}) 
	\,=\, \frac{1}{q\mu} \, \e\big[f(\tau^{(q)}_1) \big] \,,
	\qquad \text{$\p$-a.s. and in $L^1(\dd\p)$}\,,
\end{equation}
for every function $f: \N \to \R$ such that $f(\tau^{(q)}_1) \in L^1(\dd \p)$.
Since $\P(N\in\tau^{(q)}) \to \frac{1}{q\mu} > 0$ as $N\to\infty$, by the renewal theorem,
it is not difficult to deduce that
\begin{equation} \label{eq:tecch}
	\lim_{N\to\infty}
	\e \Bigg[ \frac{1}{N} \sum_{j=1}^{\cN^{(q)}_N} f(\tau^{(q)}_j - \tau^{(q)}_{j-1}) \,\Bigg|\,
	N \in \tau^{(q)} \Bigg] \,=\, \frac{1}{q\mu} \, \e\big[f(\tau^{(q)}_1) \big] \,.
\end{equation}
We are going to apply this to $f(\ell) := \bbE [ H^{(q)}_{\ell, \omega}]$.
Recalling \eqref{eq:free}, by Jensen's inequality we get
\begin{equation} \label{eq:lbuse0}
\begin{split}
	\f(\beta,h) & \,\ge\, \limsup_{N\to\infty} \frac{1}{N}
	\bbE \E \Bigg[ \sum_{j=1}^{\cN^{(q)}_N}
	H^{(q)}_{\tau^{(q)}_j - \tau^{(q)}_{j-1}, \theta^{\tau^{(q)}_{j-1}} \omega}
	\Bigg| N\in\tau^{(q)} \Bigg]  \,=\, \frac{1}{q\mu} \bbE \bE
	\Big[ H^{(q)}_{\tau^{(q)}_1, \omega} \Big] \\
	& \,=\, \frac{1}{q\mu} \sum_{N=1}^\infty \P(\tau_q=N) \,
	\bbE \Big[ \log \E \Big[
	e^{\sum_{n=1}^N
	(\beta \omega_n -\Lambda(\beta)+ h) \ind_{\{n \in \tau\}}} \Big| \tau_q = N \Big] \Big] \\
    & \,=\, \frac{h-\Lambda(\beta)}{\mu} + \frac{1}{q\mu} \sum_{N=1}^\infty \P(\tau_q=N) \,
	\bbE \Big[ \log \E \Big[ e^{\sum_{n=1}^N
	\beta \omega_n  \ind_{\{n \in \tau\}}} \Big| \tau_q = N \Big] \Big]  \,.
\end{split}
\end{equation}
The rest of this section is devoted to studying this lower bound as $\beta, h \downarrow 0$.

\smallskip

Let us denote
\begin{equation}\label{eq:uu2}
		u_{N,q}(n) \,:=\, \P(n \in \tau | \tau_q = N) \,, \qquad u_{N,q}(n,m) 
		\,:=\, \P(n \in \tau, \, m \in \tau | \tau_q = N) \,. 
\end{equation}
By Jensen's inequality,
\begin{equation}\label{eq:logEJensen}
\log \E \Big[ e^{\sum_{n=1}^N\beta \omega_n  \ind_{\{n \in \tau\}}} \Big| \tau_q = N \Big] - \sum_{n=1}^N \beta\omega_n u_{N,q}(n)
 \geq 0.
\end{equation}
Therefore we can apply Fatou's Lemma in \eqref{eq:lbuse0} to obtain
\begin{eqnarray}
&& \liminf_{\beta\downarrow 0} \frac{\f(\beta, c\beta^2)}{\beta^2} \label{eq:fbetacbd}\\
&=&\frac{c-\frac{1}{2}}{\mu}+ \liminf_{\beta\downarrow 0} \frac{1}{q\mu} 
\sum_{N=1}^\infty \P(\tau_q=N) \,	
\bbE \Big[ \frac{\log \E \big[ e^{\sum_{n=1}^N \beta \omega_n  \ind_{\{n \in \tau\}}} 
\big| \tau_q = N \big]-\sum_{n=1}^N \beta\omega_n u_{N,q}(n)}{\beta^2} \Big]  \nonumber \\
&\geq& \frac{c-\frac{1}{2}}{\mu}+\frac{1}{q\mu} \sum_{N=1}^\infty \P(\tau_q=N) \,	
\bbE \Big[ \liminf_{\beta\downarrow 0}\frac{\log \E \big[ e^{\sum_{n=1}^N \beta \omega_n 
 \ind_{\{n \in \tau\}}} \big| \tau_q = N \big]-\sum_{n=1}^N \beta\omega_n u_{N,q}(n)}{\beta^2} \Big]. \nonumber
\end{eqnarray}
By Taylor expansion, for fixed disorder $\go$ and as $\gb\downarrow 0$, we have
\begin{equation*}
\begin{split}
	\E \Big[
	e^{\sum_{n=1}^N
	\beta \omega_n \ind_{\{n \in \tau\}}} \Big| \tau_q = N \Big]
	\,=\,  1 + \beta \sum_{n=1}^N
	\omega_n u_{N,q}(n)
	 + \frac{1}{2} \beta^2
	\sum_{m,n=1}^N \omega_m \omega_n u_{N,q}(m,n) + o(\beta^2) \,.
\end{split}
\end{equation*}
Since $\log(1+x) = x - \frac{1}{2} x^2 + o(x^2)$ as $x\downarrow 0$, we obtain
\begin{multline}
 \bbE\Big[\liminf_{\beta\downarrow 0}\frac{\log \E \big[ e^{\sum_{n=1}^N \beta \omega_n  
 \ind_{\{n \in \tau\}}} \big| \tau_q = N \big]-\sum_{n=1}^N \beta\omega_n u_{N,q}(n)}{\beta^2}\Big] \\
= \bbE\Big[\frac{1}{2}\sum_{m,n=1}^N \omega_m\omega_n\big(u_{N,q}(m,n)-
u_{N,q}(m)u_{N,q}(n)\big)\Big] \\
= \frac{1}{2} \sum_{n=1}^N (u_{N,q}(n) - u_{N,q}(n)^2) = \frac{q}{2} 
- \frac{1}{2}\sum_{n=1}^Nu_{N,q}(n)^2 \,, \label{eq:Elimbetabd}
\end{multline}
where the last equality holds, by \eqref{eq:uu2}, 
because $\sum_{n=1}^N \ind_{\{n \in \tau\}} = q$
on the event $\{\tau_q = N\}$.

Note that
\begin{equation}\label{eq:sumsq}
	\sum_{n=1}^Nu_{N,q}(n)^2 =  \E\big[ |\tau\cap\tilde\tau\cap(0,N]| \big|\tau_q=\tilde\tau_q=N\big],
\end{equation}
where $\tilde\tau$ is an independent copy of $\tau$. Intuitively, since each renewal process
$\tau, \tilde\tau$ has mean return time $\mu$, the expression
in \eqref{eq:sumsq} should be of the order $q/\mu$. In order to prove it,
we fix $\eta>0$. Decomposing the right hand side in \eqref{eq:sumsq} according 
to whether $|\tau\cap\tilde\tau\cap (0,N] | \leq (1+\eta) N/\mu^2$ or not, and
noting that $|\tau\cap(0,N]| \, \ind_{\{\tau_q=N\}} = q$,
we obtain
\begin{eqnarray*}
\sum_{n=1}^Nu_{N,q}(n)^2 &\leq& \frac{(1+\eta)N}{\mu^2}+ q 
\frac{\P\big(|\tau\cap\tilde\tau\cap (0,N] 
| > (1+\eta) N/\mu^2, \tau_q=\tilde\tau_q=N \big) }{\P(\tau_q=\tilde\tau_q=N)} \\
&\leq& \frac{(1+\eta)N}{\mu^2}+ q 
\frac{\sqrt{\P\big(|\tau\cap\tilde\tau\cap (0,N] | 
> (1+\eta) N/\mu^2\big)} \cdot \P(\tau_q=N) }{\P(\tau_q=N)^2}\,,
\end{eqnarray*}
where we used Cauchy-Schwarz inequality for the second inequality.
We note that $\tau\cap\tilde \tau$ is a renewal process with finite mean $\mu^2$. 
Therefore, by a standard Cramer large deviation estimate \cite[Theorem 2.2.3]{cf:DZ},
there exist $C_\eta \in (0,\infty)$ such that for all $q\in\N$ large enough
$$
	\max_{N\in ((1-\eta)q\mu, (1+\eta)q\mu)} \P\big(|\tau\cap\tilde\tau\cap (0,N] 
	| > (1+\eta) N/\mu^2\big) \leq e^{-C_\eta \, q},
$$
and hence, uniformly in $N\in ((1-\eta)q\mu, (1+\eta)q\mu)$, we have
\begin{equation}\label{eq:alfb}
\sum_{n=1}^Nu_{N,q}(n)^2 \leq q\frac{(1+\eta)^2}{\mu}
+ q \frac{e^{-\frac{1}{2} C_\eta \, q}}{\P(\tau_q=N)}.
\end{equation}
We finally plug the bound \eqref{eq:alfb} into \eqref{eq:Elimbetabd}, 
and then into \eqref{eq:fbetacbd} (note that the denominator $\P(\tau_q=N)$ in \eqref{eq:alfb}
gets simplified). Restricting the summation to $N\in ((1-\eta)q\mu, (1+\eta)q\mu)$,
thanks to \eqref{eq:logEJensen}, we obtain, for $q$ large enough,
$$
\liminf_{\beta\downarrow 0} \frac{\f(\beta, c\beta^2)}{\beta^2} \geq  \frac{c-\frac{1}{2}}{\mu} 
+ \frac{1}{2\mu} \Big(1- \frac{(1+\eta)^2}{\mu}\Big) p_\eta(q) 
-\frac{1}{2\mu}(2\eta q\mu)\, e^{-\frac{1}{2} C_\eta \, q},
$$
where $p_\eta(q):=\P\big(\tau_q \in ((1-\eta)q\mu, (1+\eta)q\mu) \, \big)\to 1$ as $q\to\infty$ 
by the law of large numbers.
First letting $q\uparrow \infty$ and then letting $\eta\downarrow 0$ gives 
the desired bound \eqref{eq:gooa}.
\end{proof}

\subsection{Completing the proof}
\label{sec:completing}

Recall the smoothing inequality \eqref{eq:smooth}
\begin{equation}
	0\leq \f^{{\rm pin}}(\beta, h) \,\le\, \frac{1+\alpha}{2} A_{\gb,\frac{h-h_c(\gb)}{\beta}} 
	\frac{(h-h_c(\beta))^2}{\beta^2} \,.
\end{equation}
Observe that $0 \le h_c(\beta) \le \Lambda(\beta) = \frac{1}{2} \beta^2 + o(\beta^2)$
for every $\beta \ge 0$, cf. \cite[Proposition 5.1]{cf:G1}.
Setting $h = c \beta^2$, we then have
$\frac{h-h_c(\gb)}{\beta} \to 0$ and hence, since
$\lim_{(\beta,\delta) \to (0,0)} A_{\beta,\delta} = 1$,
\begin{equation}\label{eq:comb01}
	\liminf_{\beta\downarrow 0} \frac{\f(\beta, c\beta^2)}{\beta^2} \leq \frac{1+\alpha}{2} 
\bigg[ c - \bigg(\limsup_{\beta \downarrow 0}
	\frac{h_c(\beta)}{\beta^2} \bigg) \bigg]^2 \,.
\end{equation}
Combining \eqref{eq:comb01} with \eqref{eq:gooa} gives
\begin{equation*}
	\frac{1+\alpha}{2} \bigg[ c - \bigg(\limsup_{\beta \downarrow 0}
	\frac{h_c(\beta)}{\beta^2} \bigg) \bigg]^2 \,\ge\,
	\frac{1}{\mu}\bigg( c - \frac{1}{2\mu} \bigg)
	 \qquad \forall\, c \in \R \,.
\end{equation*}
We can rewrite this inequality as
\begin{equation*}
	A c^2 + B c + C \ge 0  \qquad \forall\, c \in \R \,,
\end{equation*}
with
\begin{gather*}
	A = \frac{1+\alpha}{2} \,, \quad \
	B = -(1+\alpha) \limsup_{\beta \downarrow 0}
	\frac{h_c(\beta)}{\beta^2} -\frac{1}{\mu} \,, \quad \
	C = \frac{1+\alpha}{2} \bigg(\limsup_{\beta \downarrow 0}
	\frac{h_c(\beta)}{\beta^2} \bigg)^2
	+ \frac{1}{2\mu^2} \,.
\end{gather*}
Then we must have $B^2 - 4AC \le 0$ and this readily leads to
\begin{equation*}
	\limsup_{\beta \downarrow 0}
	\frac{h_c(\beta)}{\beta^2} \,\le\,
	  \frac{1}{2\mu} \frac{\alpha}{1+\alpha} \,,
\end{equation*}
which is precisely the upper bound in \eqref{eq:CSZ+}.

\section{On the Copolymer Model: Lower Bound}\label{sec4}

We now consider the copolymer model, with constrained partition function
$$
	Z_{N,\gl,h}^{{\rm cop}, \rc, \omega} \,=\,
	\E\Big[ e^{-2\gl\sum_{n=1}^N(\omega_n +h_a(\lambda) - h) 
	\ind_{\{\varepsilon_n = -1\}} } \ind_{\{ N\in\tau\}}\Big] \,.
$$
where we recall that $h_a(\lambda) =
h_{a}^{{\rm cop}}(\lambda) =(2\gl)^{-1}\Lambda(-2\gl)$, cf. \eqref{eq:hanncop}.
The method and steps are the same as for the pinning model, discussed in Section~\ref{sec2},
with only minor differences. In fact, replacing 
$\omega$ by $-\omega$, $2\gl$ by $\beta$ and $2\lambda h$ by $h$ 
casts the copolymer partition function in exactly 
the same form as the random pinning model, the only difference being $\ind_{\{n\in\tau\}}$ 
in the pinning partition function replaced by $\ind_{\{\varepsilon_n = -1\}}$, cf.
\eqref{eq:Zpinc}.

Relations \eqref{eq:tildeZ}, \eqref{eq:lkj}
and \eqref{eq:weakco} still hold with $|\tau\cap [1, k]|$ replaced by 
$\sum_{n=1}^k 1_{\{\epsilon_n = -1\}}$ and $\mu$ replaced by $2$. Following the 
same procedure, it suffices to show
\begin{equation}
	\liminf_{N\to\infty} \bbE\big[ \big( Z_{N,\gl,h}^{\rc, \omega} \big)^\zeta \big] \,<\, \infty
\end{equation}
for
\begin{eqnarray}\label{cophz}
	h=h_\epsilon:=c_\epsilon\lambda :=(1-\epsilon)\frac{\ga}{2(\ga+1)}\gl 
	\qquad \text{and} \qquad
	\zeta=\zeta_\epsilon \,:=\, \frac{1}{1+\alpha} \,+\, \frac{\epsilon}{2}\frac{\alpha}{1+\alpha}
	 \,.
\end{eqnarray}
The only major difference in the calculation is in \eqref{ZJpin}. In the copolymer case, this is replaced by
\begin{equation}\label{ZJcop}
\begin{split}
	\hat Z_J  \,:=\, \sumtwo{d_1, f_1 \in B_{j_1}}{d_1 \le f_1}
	\ldots \sumtwo{d_{\ell-1}, f_{\ell-1} \in B_{j_{\ell-1}}}{d_{\ell - 1} \le f_{\ell - 1}}
	\sum_{d_\ell \in B_{j_\ell} = B_m}
	\Bigg( \prod_{i=1}^\ell K(d_i - f_{i-1}) z_{f_{i-1},d_i}^{{\rm cop}} Z_{d_i, f_i} \Bigg) \,,
\end{split}
\end{equation}
where $Z_{d,f}$ is defined in analogy to \eqref{zdf} and $z_{f_{i-1},d_i}^{{\rm cop}} := z_{(f_{i-1},d_i]}^{{\rm cop}}$, with
\begin{equation*}
z_I^{{\rm cop}} :=\frac{1+e^{-2\lambda \sum_{n\in I\cap\N}(\omega_n 
+h_a(\lambda) - h)}}{2} \qquad \mbox{for any } I\subset (0,\infty).
\end{equation*}
Defining $\overline{f}_{i-1}:=j_{i-1}k$ and $\overline{d}_{i}:=(j_i-1) k$, with 
$k= t_\epsilon \beta^{-2}=t_\epsilon \lambda^{-2}/4$, we use that (see \cite[(3.16)]{cf:T1})
\begin{eqnarray}\label{prodz}
z_{f_{i-1},d_i}^{{\rm cop}}\leq 2 \,z_{(f_{i-1},\overline{f}_{{i-1}}]\cup (\overline{d}_{i},d_i] }^{{\rm cop}}  \,\,z_{\overline{f}_{{i-1}}, \overline{d}_{i} }^{{\rm cop}}.
\end{eqnarray}
Following \eqref{eq:fracmom} we have that
\begin{eqnarray} \label{eq:fracmomcop}
	\bbE \big[ \big( Z_{N,\gl, c_\epsilon \gl}^{ \rc, \omega} \big)^{\zeta_\epsilon} \big]
	\,\le\, \sum_{J \subseteq \{1,\ldots, m\}:
	\ m \in J} \bbE \big[ \big( \hat Z_J \big)^{\zeta_\epsilon} \big] \,,
\end{eqnarray}
for $\gz_\epsilon$ chosen in \eqref{cophz} (the same as in \eqref{eq:zetaeps}).
Substituting \eqref{prodz} into \eqref{ZJcop}, we have
\begin{eqnarray*}
                \bbE \big[ \big( \hat Z_J \big)^{\zeta_\epsilon} \big] \leq
                2^\ell \prod_{i=1}^\ell \bbE \left[\left(z_{\overline{f}_{i-1},\overline{d}_i}^{{\rm cop}} \right)^{\zeta_\epsilon} \right]
                 \bbE \big[ \big( \breve Z_J \big)^{\zeta_\epsilon} \big],
\end{eqnarray*}
where
$$
	\breve Z_J  \,:=\, \sumtwo{d_1, f_1 \in B_{j_1}}{d_1 \le f_1}
	\ldots \sumtwo{d_{\ell-1}, f_{\ell-1} \in B_{j_{\ell-1}}}{d_{\ell - 1} \le f_{\ell - 1}}
	\sum_{d_\ell \in B_{j_\ell} = B_m}
	\Bigg( \prod_{i=1}^\ell K(d_i - f_{i-1}) z_{(f_{i-1},\overline{f}_{{i-1}}]\cup (\overline{d}_{i},d_i] }^{{\rm cop}} Z_{d_i, f_i} \Bigg) \,. 
$$
To proceed further, one needs to note that
$(a+b)^{\zeta_\epsilon} \le a^{\zeta_\epsilon} + b^{\zeta_\epsilon}$, for all $a,b\ge 0$, hence
\begin{equation*}
\begin{split}
	\bbE \left[\left(z_{\overline{f}_{i-1},\overline{d}_i}^{{\rm cop}} \right)^{\zeta_\epsilon} 
	\right] & \leq \frac{1}{2^{\zeta_\epsilon}} \Big( 1 + 
	\bbE \Big[e^{-2 \lambda \zeta_{\epsilon} \sum_{n =\overline{f}_{i-1}+1}^{\overline{d}_{i}} 
	(\omega_n + h_a(\lambda) - h)} \Big] \Big)\\
	& = \frac{1}{2^{\zeta_\epsilon}} \Big( 1 + 
	\bbE \Big[e^{(\overline{d}_{i} - \overline{f}_{i-1})
	[\Lambda(-2 \lambda \zeta_{\epsilon}) -
	\zeta_{\epsilon}\Lambda(-2 \lambda)  + 2 \lambda \zeta_\epsilon h]} \Big] \Big)
	\leq \frac{1}{2^{\zeta_\epsilon}} ( 1 + 1) = 2^{1-\zeta_\epsilon},
\end{split}
\end{equation*}
where the last inequality
holds for $\lambda$ and $\epsilon$ small enough. Indeed, (recall \eqref{eq:asLambda} and 
\eqref{cophz})
\begin{equation}
\Lambda(-2\lambda\zeta_{\epsilon}) - \zeta_{\epsilon} \Lambda(-2\lambda) 
+ 2\lambda \zeta_{\epsilon} h \stackrel{\lambda\downarrow 0}{\sim} 
2\lambda^2 \zeta_{\epsilon} (\zeta_{\epsilon} -1 + c_{\epsilon})
\end{equation}
and 
\begin{equation}
\lim_{\epsilon\downarrow 0}(\zeta_{\epsilon} -1 + c_{\epsilon}) 
= \frac{-\alpha}{2(1+\alpha)} < 0.
\end{equation}
Finally, let $\tilde\bbP_{J}$ be the law of the disorder obtained from $\bbP$, where 
independently for each $n \in \bigcup_{i \in J} B_i$, the law of $\omega_n$ is tilted with 
density $e^{\delta \omega_n-\Lambda(\delta)}$, with
\begin{equation*}
	\gd:=\, a_\epsilon \beta 
	:= (1-\zeta_\epsilon) \gl \,,
\end{equation*}
cf. \eqref{eq:deltabetaeps}. In complete analogy with \eqref{zn}, we have
\begin{eqnarray*}
\tilde\bbE_{J}\left[ z_{(f_{i-1},\overline{f}_{{i-1}}]
\cup (\overline{d}_{i},d_i] }^{{\rm cop}} \right] \leq 1.
\end{eqnarray*}
The rest of the proof then proceeds exactly as in the analysis of the pinning model.

\section{On the Copolymer Model: Upper Bound}
\label{sec5}

The proof goes along the very same lines as for the pinning model,
cf. Section~\ref{sec3}. In fact, the analogue of the lower bound
\eqref{eq:gooa} on the free energy is much simpler for the copolymer.

\begin{lemma}\label{th:Jencop}
For every $c\in\R$
\begin{equation}\label{eq:gooa2}
\liminf_{\gl\downarrow 0} \frac{\f(\lambda, c\lambda)}{\lambda^2} \geq c-\frac{1}{2}.
\end{equation}
\end{lemma}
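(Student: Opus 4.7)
The plan is to adapt the strategy of Lemma~2.1 (the pinning lower bound), but the copolymer case is substantially simpler: after integrating out the excursion signs $\hat\varepsilon$, the Hamiltonian is already nonlinear in the disorder, so no coarse graining via $\tau^{(q)}$ is required. The three steps are: (i) apply Jensen's inequality under the conditional law $\p(\,\cdot\mid N\in\tau)$ to exchange the logarithm and the renewal expectation, (ii) invoke the elementary renewal theorem to reduce to a one-step inequality involving only $\tau_1$, and (iii) perform a second-order Taylor expansion and pass to the limit by Fatou's lemma.

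Concretely, for $N$ large enough that $\p(N\in\tau)>0$, write
\begin{equation*}
Z_{N,\lambda,c\lambda}^{\cop,\rc,\omega} \,=\, \p(N\in\tau)\,\e\Big[\prod_{j=1}^{|\tau\cap(0,N]|} Y_j(\omega) \,\Big|\, N\in\tau\Big], \quad Y_j(\omega) := \frac{1+e^{-2\lambda \sum_{n=\tau_{j-1}+1}^{\tau_j}(\omega_n+h_a(\lambda)-c\lambda)}}{2}.
\end{equation*}
Jensen's inequality for $\log$ under $\p(\,\cdot\mid N\in\tau)$, followed by $\bbE$, division by $N$, and $N\to\infty$, yields (using $\p(N\in\tau)\to 1/\mu$ and an elementary renewal theorem on expectations, cf. \eqref{eq:tecch})
\begin{equation*}
\f(\lambda, c\lambda) \,\ge\, \frac{1}{\mu}\,\e\big[F(\tau_1,\lambda)\big], \qquad F(\ell,\lambda) := \bbE\!\left[\log\frac{1+e^{-2\lambda(S_\ell+\ell(h_a(\lambda)-c\lambda))}}{2}\right],
\end{equation*}
where $S_\ell=\omega_1+\cdots+\omega_\ell$. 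For each fixed $\ell$, the Taylor expansion $\log\tfrac{1+e^x}{2}=\tfrac{x}{2}+\tfrac{x^2}{8}+O(x^3)$, combined with $\bbE[S_\ell]=0$, $\bbvar(S_\ell)=\ell$, and $h_a(\lambda)=\lambda+o(\lambda)$, gives the pointwise limit $F(\ell,\lambda)/\lambda^2\to \ell(c-\tfrac{1}{2})$ as $\lambda\downarrow 0$. The AM--GM inequality $\tfrac{1+e^x}{2}\ge e^{x/2}$ provides the uniform lower bound $F(\ell,\lambda)/\lambda^2\ge -C_c\,\ell$ for all $\lambda$ small enough, with $C_c$ depending only on $c$. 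Since $\e[\tau_1]=\mu<\infty$, Fatou's lemma (applied to $\ell\mapsto F(\ell,\lambda)/\lambda^2 + C_c\ell$ integrated against the law of $\tau_1$) then gives
\begin{equation*}
\liminf_{\lambda\downarrow 0}\frac{\f(\lambda, c\lambda)}{\lambda^2} \,\ge\, \frac{1}{\mu}\,\e\big[\tau_1\big(c-\tfrac{1}{2}\big)\big] \,=\, c-\tfrac{1}{2}.
\end{equation*}

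The only mildly delicate point is the renewal-theorem limit in step (ii), where one needs $\frac{1}{N}\e[\sum_j F(\tau_j-\tau_{j-1},\lambda)\mid N\in\tau]\to \e[F(\tau_1,\lambda)]/\mu$; this follows from the standard renewal theorem because for each fixed $\lambda$ small, $|F(\ell,\lambda)|\le C\lambda^2 \ell$ is $\p$-integrable in $\tau_1$. The key analytical insight --- and the reason the constant $\tfrac{1}{2}$ appears --- is the pair of Taylor coefficients $f'(0)=\tfrac{1}{2}$ and $f''(0)=\tfrac{1}{4}$ of $f(x)=\log\tfrac{1+e^x}{2}$: the first-order contribution is $-(1-c)$ via the annealed-critical-point shift $h_a(\lambda)-c\lambda=(1-c)\lambda+o(\lambda)$, while the second-order contribution is $+\tfrac{1}{2}$ via $\bbvar(S_\ell)=\ell$, which sum to $c-\tfrac{1}{2}$ after the factor $\tau_1$ is averaged and the $1/\mu$ cancels against $\e[\tau_1]$.
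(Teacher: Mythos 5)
Your proof is correct and follows essentially the same route as the paper's: Jensen's inequality under $\p(\,\cdot\mid N\in\tau)$ to pull the logarithm inside the renewal expectation, the renewal theorem to reduce to a single excursion $\tau_1$, the bound $\log\frac{1+e^x}{2}\ge \frac{x}{2}$ to produce the nonnegative quantity needed for Fatou, and a second-order Taylor expansion giving the constants $\frac12$ and $\frac14$. The paper implements your AM--GM step by adding the compensator $\lambda\sum_{n=1}^N(\omega_n+h_a(\lambda)-h)$ inside the expectation, which is the identical device.
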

\begin{proof}
A direct application of Jensen's inequality is sufficient. Let
$$
\cN_N:=\max\{n\in \bbN_0\colon \tau_n\leq N\} = |\tau \cap [1, N]|.
$$
Recalling \eqref{eq:free} and \eqref{eq:Zcopc}, in analogy with \eqref{eq:tecch} we obtain
\begin{align*}
	\f(\lambda,h) &=
	\lim_{N\to \infty} \frac{1}{N}\,\bbE\log \E\Big[ \prod_{j=1}^{\cN_N}
	\frac{1+e^{-2\lambda\sum_{n=\tau_{j-1}+1}^{\tau_j} (\go_n+h_{a}(\lambda) -h) } }{2} \Big|
	\, N\in\tau \Big]\\
	&\geq \frac{1}{\mu} \bbE\E \left[\log\frac{1+e^{-2\lambda\sum_{n=1}^{\tau_1} 
	(\go_n+h_{a}(\lambda) -h) } }{2}  \right]\\
	&= \frac{1}{\mu} \sum_{N=1}^\infty K(N)\, \bbE \left[\log\frac{1+e^{-2\lambda\sum_{n=1}^{N} 
	(\go_n+h_{a}(\lambda)-h) } }{2} + \lambda\sum_{n=1}^N(\go_n+h_{a}(\lambda)-h) \right]\\
    & \quad \,\,-\gl(h_{a}(\lambda)- h),
\end{align*}
where the term $\lambda\sum_{n=1}^N(\go_n+h_{a}(\lambda)-h)$ 
is inserted to ensure that the expression inside the expectation is nonnegative,
by Jensen's inequality. We can then apply Fatou's Lemma, analogously to \eqref{eq:fbetacbd}:
recalling \eqref{eq:hanncop}, a simple Taylor expansion yields
\begin{eqnarray*}
\liminf_{\gl\downarrow 0}\frac{\f(\gl,c\gl)}{\lambda^2}\geq  c-\frac{1}{2},
\end{eqnarray*}
completing the proof.
\end{proof}

Coupling the lower bound \eqref{eq:gooa2}
with the smoothing inequality \eqref{eq:smoothcop} for the copolymer model,
exactly as we did for the pinning model in Section~\ref{sec:completing},
we obtain 
\begin{equation*}
	\limsup_{\lambda \downarrow 0}
	\frac{h_c(\lambda)}{\lambda} \,\le\,
	\frac{\alpha}{2(1+\alpha)} \,,
\end{equation*}
which completes the proof of Theorem~\ref{thmcop}.

\bigskip
\noindent
{\bf Acknowledgments.} RS is supported by NUS grant R-146-000-148-112. NZ is 
supported by a Marie Curie International Reintegration Grant within the 7th European Community Framework Programme, IRG-246809. FC and JP acknoweldge the support of ERC Advanced Grant 267356 VARIS and ANR MEMEMO2 10-BLAN-0125-03.
RS, FC and NZ thank the Department of Mathematics and
Applications at the University of Milano-Bicocca,  the Institute for Mathematical Sciences and the 
Department of Mathematics at the National University 
of Singapore, and Academia Sinica in Taipei for hospitality, 
where parts of this work were carried out. 
 QB is grateful to ENS Lyon for its support while this work was initiated.

\normalsize

\bigskip

\end{document}